\documentclass[11 pt]{amsart}

\usepackage{lineno,hyperref}
\usepackage{amsmath,amssymb}
\usepackage{lineno}
\usepackage{centernot}
\usepackage{graphicx}%
\usepackage{multirow}%
\usepackage{amsmath,amssymb,amsfonts}%
\usepackage{tikz-cd}
\usepackage{amsthm}%
\usepackage{mathrsfs}%
\usepackage[title]{appendix}%
\usepackage{xcolor}%
\usepackage{textcomp}%
\usepackage{manyfoot}%
\usepackage{booktabs}%
\usepackage{algorithm}%
\usepackage{algorithmicx}%
\usepackage{algpseudocode}%
\usepackage{listings}%
\usepackage{float}

\theoremstyle{plain}
\newtheorem{thm}{Theorem}[section]
\newtheorem{theorem}[thm]{Theorem}
\newtheorem{lemma}[thm]{Lemma}
\newtheorem{corollary}[thm]{Corollary}
\newtheorem{proposition}[thm]{Proposition}

\theoremstyle{definition}
\newtheorem{definition}[thm]{Definition}
\newtheorem{example}[thm]{Example}

\newtheorem{remark}[thm]{Remark}

\numberwithin{equation}{section}

\newcommand{\interior}[1]{{\kern0pt#1}^{\mathrm{o}}}

\begin{document}
	
	
	\title[Semicommutativity Via Path and Leavitt Path algebra]{Semicommutativity via Zero-Insertive Elements: Insights \\ from Path and Leavitt Path Algebras}
	
	\author[Sanjiv Subba]{Sanjiv Subba $^\dagger$}
	
	\address{$^\dagger$School of Applied Sciences\\ UPES, Dehradun\\  Bidholi-248007\\ India}
	\email{sanjivsubba59@gmail.com}
	\author[Tikaram Subedi]{Tikaram Subedi  {$^{\dagger *}$}}
	\address{$^{\dagger * }$Department of Mathematics\\  National Institute Of Technology  Meghalaya\\ Sohra, Shillong-793108\\ India}
	\email{tikaram.subedi@nitm.ac.in}
	
	\subjclass[2010]{16S50, 16S88, 16U80, 16U99}.
	
	\keywords{Path algebra, Leavitt path algebra, semicommutative rings, clean rings, nil clean rings}
	
	\begin{abstract}
		In the path algebra $KE$, where $K$ is a field and $E$ is a directed graph (or quiver), every non-loop edge can be expressed in the form $arb$, where $a,b,r\in KE$ and $ab=0$. This fact leads us to the characterization  that $KE$ is semicommutative if and only if $E$ contains no non-loop edges. For a ring $R$, let $Z_i(R)=\{x\in R: x=arb, a,b,r\in R, ab=0 \}$ and call elements of $Z_i(R)$ \textit{zero-insertive}. It follows that $R$ is semicommutative if and only if $Z_i(R)\subseteq E(R)$ and weakly semicommutative if and only if $Z_i(R)\subseteq N(R)$. We establish that every non-unit element of the Leavitt path algebra $L_K(A_2)$  is zero-insertive. If $K$ is a field and $n\geq 2$ is an integer, then each zero-insertive element of $L_K(A_n)$ is nil-clean if and only if $K\cong \mathbb{F}_2$. We call a ring $R$ \textit{zero-insertive nil clean (ZINC)} if every zero-insertive element is nil clean. We show that the Leavitt path algebra $L_{\mathbb{F}_2}(A_n)$ is a ZINC ring. Additionally, we investigate the behavior of zero-insertive elements and ZINC rings under various ring extensions. 
		
	\end{abstract}

	\maketitle
	\section{Introduction}

	
	\quad A ring $R$ is said to be \textit{semicommutative} (\cite{aas}) if $arb=0$, for all $r\in R$, whenever $ab=0$. A ring $R$ is \textit{weakly semicommutative} (\cite{wscr}) whenever $arb$ is nilpotent for all $r\in R$ whevever, $ab=0, a,b\in R$. In context of path algebras $R=KE$, where $E$ is a directed graph (or quiver) and $K$ a field, elements of the form $x=arb$, where $ab=0$, and $a,b,r\in R$ arise naturally. In particular, every non-loop edge can be represented in this form. Consequently, such elements provide a natural link between various generalizations of semicommutative rings and path algebras.
	
	For definitions and basic terminology related to path and Leavitt path algebras, we refer the reader to \cite{Lvt}. A quiver (directed graph) $E=(E^0,E^1,r,s)$ consists of two sets $E^0, E^1$ together with two functions $r,s:E^1\rightarrow E^0$. The elements of $E^0$ are called \textit{vertices} and those  of $E^1$ are called \textit{edges}. An edge $e\in E^1$ is called loop if $s(e)=r(e)$. A vertex $v\in E^0$ is said to be \textit{regular} if it emits a finite non-empty set of edges.\\
	A \textit{path} $\mu$ of length $n>0$ is a finite sequence of  edges $\mu=e_1e_2\dots e_n$ with $r(e_i)=s(e_{i+1})$ for all $1\leq i<n$. A vertex is regarded as a path of length $0$. We denote  by $Path(E)$ the set of all paths of $E$. \\ 
	For each edge $e\in E^1$, we consider an edge in the opposite direction, called ghost edge and denote it by $e^*$. So, we have $r(e^*)=s(e)$ and $s(e^*)=r(e)$. For each $\mu=e_1e_2\dots e_n\in Path(E)$, $\mu^*=e_n^*\dots e_2^*e_1^*$ is the corresponding ghost path.
	
	Let $E$ be an arbitrary quiver and $K$ a field. We denote by $(E^1)^*$ the set $\{e^*:e\in E^1 \}$. The \textit{Leavitt path algebra} of $E$, denoted by $L_K(E)$, with coefficients in $K$ is the free associative $K$-algebra generated by the set $E^0\cup E^1\cup (E^1)^*$, subject to the following relations:\\
	$(V)$ $vw=\delta_{v,w}v$ for all $v,w\in E^0$,\\
	$(E1)$ $s(e)e=er(e)=e$ for all $e\in E^1$,\\
	$(E2)$ $r(e)e^*=e^*s(e)=e^*$ for all $e\in E^1$,\\
	$(CK1)$ $e^*f=\delta_{e,f}r(e)$ for all $e\in E^1$, and\\
	$(CK2)$ $v=\sum_{\{e\in E^1:s(e)=v\}}ee^*$ for every regular vertex $v\in E^0$.
	
	The \textit{path $K$-algebra} of $E$, denoted by $KE$, is defined as the free associative $K$-algebra generated as an algebra by the set $E^0\cup E^1$, with relations given by $(V)$ and $(E1)$ of the definition of $L_K(E)$.
	Equivalently, the path algebra $KE$ is the algebra with basis the set of all paths in the quiver $E$ and with multiplication defined for paths $\gamma,\gamma'$ by \\
	\[
	\gamma\cdot\gamma' =
	\begin{cases}
		\gamma\gamma', & \text{if } r(\gamma)=s(\gamma'),\\
		0, & \text{otherwise}.
	\end{cases}
	\]

	\quad In this paper, $R$ represents an associative ring with unity, and all modules  are unital. We adopt the following notations: $U(R)$ for the set of all units of $R$, $Z(R)$ for the set of all central elements of $R$, $E(R)$ for the set of all idempotent elements of $R$, $J(R)$ for the Jacobson radical of $R$, $N(R)$ for the set of all nilpotent elements of $R$, $T_n(R)$ for the ring of upper triangular matrices of order $n \times n$ over $R$, $M_n(R)$ for the ring of all $n \times n$ matrices over $R$ and $\mathbb{F}_2$ for the field with $2$ elements. Moreover, we use the notation $E_{ij}$ for the matrix in $M_n(R)$ whose $(i, j)^{th}$ entry is $1$ and zero elsewhere. 
	
	\section{Zero-insertivity}
	\quad Recall that in a weakly semicommutative ring $R$, an element of the form $x=arb$ is nilpotent whenever $ab=0$, where $a,b,r\in R$. In a path algebra, every non-loop edge  can be viewed as an element of this form. For instance, let us consider the following example.
	\begin{example}\label{1ste}
		Consider the path algebra $KQ$ generated by the following quiver:
		
		\[\begin{tikzcd}
			{Q:} && \bullet v & \bullet w
			\arrow["e", from=1-3, to=1-3, loop, in=145, out=215, distance=10mm]
			\arrow["f", bend left=15, from=1-3, to=1-4]
			\arrow["h", bend left=15, from=1-4, to=1-3]
			\arrow["g", from=1-4, to=1-4, loop, in=325, out=35, distance=10mm]
		\end{tikzcd}\]
		
		Observe that each path $\gamma\in P(Q)$ can be expressed as $\gamma=s(\gamma)\gamma r(\gamma)$, with $s(\gamma)r(\gamma)=0$, whenever $s(\gamma)\neq r(\gamma)$. Moreover, there are non-monoial elements of $KQ$ which are expressible in this form, for instance, $k_1e^nf+k_2fg^m=v(k_1e^nf+k_2fg^m)w, k_1,k_2\in K$, $m,n$ are non-negative integers.
	\end{example}
	
	\begin{definition}
		We call an element $x\in R$  \textit{zero-insertive}, if $x=arb$ for some $a,b,r\in R$ with $ab=0$. We denote $Z_i(R)=\{x\in R$: $x$~{is~a~zero-insertive~element~of}~$R\}$.
	\end{definition}
	
	If $Q$ is a quiver with at least two vertices, then $\gamma=\sum_{i=1}^n k_ip_i=s(\gamma)\gamma r(\gamma)\in KQ$ is zero-insertive whenever $s(p_i)=s(p_j)\neq r(p_i)=r(p_j)$ for all $1\leq i,j\leq n$, $k_i\in K$ and $p_i\in P(Q)$. However the converse is not true, for instance, in Example \ref{1ste}, $p=efh\in Z_i(KQ)$ and $s(p)=r(p)=v$.

	By the definition of $Z_i(R)$, it follows immediate that a ring $R$ is semicommutative ( respectively, weakly semicommutative) if and only if $Z_i(R) =0~ ( \text{respectively,} ~Z_i(R)\subseteq N(R))$.
	
	




		Suppose $Q$ is a quiver consisting of a single vertex $v$ and $n$ loops $e_1,e_2,\dots, e_n$ based at $v$, that is,
		\[\begin{tikzcd}
			{Q:} & \bullet v
			\arrow["{e_1}"', from=1-2, to=1-2, loop, in=60, out=120, distance=7mm]
			\arrow[ dotted, from=1-2, to=1-2, loop, in=55, out=125, distance=10mm]
			\arrow["{e_{n-1}}", from=1-2, to=1-2, loop, in=55, out=125, distance=15mm]
			\arrow["{e_n}", from=1-2, to=1-2, loop, in=50, out=130, distance=25mm]
		\end{tikzcd}\]
		Then, $KQ\cong K\langle x_1,...x_n \rangle$, which is a domain and hence semicommutative. Thus,
		for any finite quiver $Q$, $KQ$ is semicommutative if and only if $Q$ does not possess any non-loop edges.
		\begin{proposition}\label{semZ_i(R)}
			A ring $R$ is semicommutative if and only if $ Z_i(R)\subseteq E(R)$.
		\end{proposition}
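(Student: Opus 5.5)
The forward direction is immediate from the remark preceding the statement: if $R$ is semicommutative, then $Z_i(R)=0$, and $0\in E(R)$, so $Z_i(R)\subseteq E(R)$. The whole content is therefore the converse. Assume $Z_i(R)\subseteq E(R)$, and take $a,b\in R$ with $ab=0$. The goal is $arb=0$ for every $r\in R$, which again by the remark amounts to $Z_i(R)=0$.

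The key observation is that $Z_i(R)$ is closed under multiplication by scalars coming from the ring in a controlled way. If $x=arb$ with $ab=0$, then for any $s\in R$ we have $-x=a(-r)b$ and $sx = (sa)rb$ with $(sa)b=0$. So both $x$ and $-x$ are zero-insertive, and hence both are idempotent. From $x^2=x$ and $(-x)^2=-x$ we get $x=x^2=(-x)^2=-x$, so $2x=0$.

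Next I use a second element of $Z_i(R)$ to force $x=0$. The element $x+x^2$ is not obviously in $Z_i(R)$, so instead I look at products. Consider $y=arbarb$, which equals $x^2=x$. I also want elements like $a r b r' a$, but those need not be zero-insertive.

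The approach I will commit to is to exploit that $x=arb$ and $ab=0$ together give $bx = barb$ and $xa = arba$. Then $(xa)(bx)=arb\,ab\,rb=0$. The element $z = xa\cdot r\cdot bx$ is zero-insertive (with outer factors $xa$ and $bx$, whose product is $0$), and $z = arbarbarb=x^3=x$. This is consistent but gives nothing new.

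The cleaner route is to use $1+x$ or $x+$something. The element $a(r+1)b = x + ab = x$ gives nothing either. I therefore take $x=arb$ and note that $x\cdot a = arba$. Then $w=a\,(rba r)\,b = x\,x = x$, again circular.

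The real trick is the following. Since $ab=0$, we have $(ra)(b)\ne 0$ in general, but $b\cdot$? The quantity that must vanish is $x\cdot x'$ for suitable choices. Take $x_1=a r b$ and $x_2=a r b\,a\,s\,b$, which is zero-insertive via the pair $(a,b)$ with middle factor $rbas$. Then $x_2 = x_1\cdot(asb)$, and $x_1 x_1 = x_1$, and everything is idempotent.

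The most concrete robust step is the \emph{left multiple} argument. For any $s$, the element $sx$ is zero-insertive, hence idempotent: $sxsx=sx$. Taking $s=1-x$ gives $(1-x)x=0$, which is trivial. Taking $s=1+x$ gives $sx=2x=0$, also trivial. Instead I take $s$ arbitrary and $t$ arbitrary and use that $sxt = (sa)r(bt)$ with $(sa)(bt)=0$. Thus the whole two-sided ideal $RxR$ consists of idempotents, so $x\,t\,x$ is idempotent for all $t$. The main obstacle is extracting $x=0$ from this. The plan is as follows:
\begin{enumerate}
\item Both $x$ and $-x$ lie in $RxR$, so $2x=0$.
\item For $e,f\in RxR$, the sum $e+f$ also lies in $RxR$, so $(e+f)^2=e+f$, which gives $ef+fe=0$ and hence $ef=fe$ since $2=0$ on the ideal.
\item Then $ef = (ef)(ef)$, and from $ef=fe$ and idempotence, $e\,(ex)=ex$ and $xe$ behave as in a Boolean ring. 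In particular $RxR$ is a Boolean rng, and idempotents in it commute with all of $R$, since $e+ (er-ere)$ is an idempotent-based trick.
\item Finally, with $x$ central idempotent, $x = arb$ gives $x=x^2 = x\,arb = a\,x\,r\,b$. Moving $x$ across, this equals $a r (xb)$, which in turn equals $a r b x$. Using centrality to place $x$ between $a$ and $b$, write $x = x^3 = (arb)(arb)(arb)$. Since $x$ is central and $x = x\cdot x$, we get $x = a x r b$. Further, $x\,ab=0$ implies $(xa)(b)=0$, and $xa\in RxR$ is central, so $x\cdot x = arb\,x$, where $arb\,x = a r x b$.
\end{enumerate}

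The hardest step is this last conversion to $x=0$. I will try to show $xa\cdot b$ relates to $x$: since $xa$ is central, $x = x^2 = arb\,x = a\,r\,(bx)$, and $bx\cdot a$ is central, so $x\cdot x = arb\cdot arb = ar\,(ba)\,rb$. Then I would try to commute $ba$-type elements through to produce $ab=0$.
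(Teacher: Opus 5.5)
\textbf{There is a genuine gap: the converse is never completed.} Your forward direction is fine. So are the observations that $-x$ and every single product $sxt=(sa)r(bt)$ are zero-insertive. But step 4 only tries to turn centrality of $x$ into $x=0$, and it produces circular rewritings such as $x=arbx=arxb$; you say yourself that this conversion is left open.

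The missing idea is to use $E(R)\cap N(R)=\{0\}$ by finding a zero-insertive element that is also nilpotent. Since $ab=0$, you have $(ba)^2=b(ab)a=0$. So for every $s\in R$ the element $ba\,s\,ba$ is zero-insertive: its outer factors are $ba$ and $ba$, whose product is $0$. Hence it is idempotent. It also satisfies $(basba)^2=bas(baba)sba=0$, so $basba=0$. Then, because $x=arb$ is idempotent, $x=x^3=ar\,(barba)\,rb=0$. You were one factor away: you wrote $x^2=ar(ba)rb$, and passing to $x^3$ exposes $barba$. This is exactly the paper's argument, which uses $yxryx$ in its notation.

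\textbf{Steps 1--3 also rest on an unjustified claim.} You only showed that elements of the form $sxt$ are zero-insertive. You did not show this for arbitrary elements $\sum_i s_ixt_i$ of the ideal $RxR$, and $Z_i(R)$ is not closed under addition in general. For example, in $M_2(K)$ both $E_{11}=E_{12}E_{21}E_{11}$ and $E_{22}=E_{22}E_{21}E_{12}$ are zero-insertive. Their sum is the unit $I$, which is not zero-insertive by Lemma~\ref{zdf}, since $M_2(K)$ is directly finite. So the identity $(e+f)^2=e+f$ in step 2 and the ``Boolean rng'' conclusion are not available.

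Centrality of $x$ can be rescued using single products only. The element $xr(1-x)=a\,r\,(br(1-x))$ is zero-insertive and squares to $0$, so it vanishes; similarly $(1-x)rx=0$. But this already uses the idempotent-and-nilpotent principle above. Once you have that principle, the detour through $2x=0$ and centrality is unnecessary.
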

		\begin{proof}
			$(\Rightarrow)$. It is self-evident.\\ 
			$(\Leftarrow)$. Conversely, suppose that $Z_i(R)\subseteq E(R)$. Let $x,y\in R$ be such that $xy=0$. For any $r\in R$,  $yxryx\in Z_i(R)$. So by hypothesis and  $yxryx$ being a nilpotent,  $yxryx\in E(R)\cap N(R)$. Therefore,  $yxryx=0$. Since $xry\in Z_i(R)\subseteq E(R)$, $xry=(xry)^3=xr(yxryx)ry=0$. Hence, $R$ is semicommutative.	
		\end{proof}





			
			\newpage
			Throughtout this paper, let $A_n$ denote the following quiver for $n\geq 2$:
			\begin{figure}[H]
				\centering
				\[
				\begin{tikzcd}
					{A_n}: & {v_1\bullet} & {\bullet v_2} & {\bullet v_3} & {\bullet v_{n-1}} & {\bullet v_n}
					\arrow["{e_1}", bend right=-40, from=1-2, to=1-3]
					\arrow["{e_1^*}", shift left=3, dotted, bend right=-40, from=1-3, to=1-2]
					\arrow["{e_2}", bend right=-40, from=1-3, to=1-4]
					\arrow["{e_2^*}", shift left=3, dotted, bend right=-40, from=1-4, to=1-3]
					\arrow[dotted, no head, from=1-4, to=1-5]
					\arrow["{e_{n-1}}", bend right=-40, from=1-5, to=1-6]
					\arrow["{e_{n-1}^*}", shift left=3, dotted, bend right=-40, from=1-6, to=1-5]
				\end{tikzcd}
				\]
				\caption{Quiver $A_n$.}
				
				\label{fig:An}
			\end{figure}
			For each edge $e_i\in A_n^1$, $e_i^*$ represents the corresponding ghost edge. \\
			A ring $R$ is said to be directly finite (\cite{Ldf}) if for any elements $a,b\in R$, $ab=1$ implies $ba=1$.
			\begin{lemma}\label{zdf}
				If a ring $R$ is directly finite, then a unit cannot be zero-insertive. 
			\end{lemma}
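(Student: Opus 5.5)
Suppose, for contradiction, that $u\in U(R)$ is zero-insertive, say $u=arb$ with $a,b,r\in R$ and $ab=0$. The plan is to use the unit $u$ to produce one-sided inverses of $a$ and of $b$, and then let direct finiteness turn these into two-sided inverses. That will contradict $ab=0$, since $R$ has a unity and hence $1\neq 0$ (otherwise $R=0$ and there is nothing to prove).

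\textbf{Step 1: one-sided inverses.} From $arbu^{-1}=uu^{-1}=1$ we get $a\cdot(rbu^{-1})=1$, so $a$ has a right inverse. Similarly, $u^{-1}arb=1$ gives $(u^{-1}ar)\cdot b=1$, so $b$ has a left inverse.

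\textbf{Step 2: apply direct finiteness.} Since $R$ is directly finite, $a(rbu^{-1})=1$ implies $(rbu^{-1})a=1$, so $a\in U(R)$. Likewise $(u^{-1}ar)b=1$ implies $b(u^{-1}ar)=1$, so $b\in U(R)$.

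\textbf{Step 3: contradiction.} It follows that $ab\in U(R)$. But $ab=0$, and $0$ is not a unit in a nonzero ring. In fact, invertibility of $a$ alone already suffices: $b=a^{-1}ab=0$, hence $u=arb=0$, which is not a unit.

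There is no real obstacle here. The only point to check is the order of the factors, so that direct finiteness is applied to a genuine equation of the form $xy=1$. It is also worth noting that we need only one of $a$ or $b$ to be invertible.
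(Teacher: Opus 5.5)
Your proof is correct and is essentially the paper's argument: the paper writes $arbv=1$, uses direct finiteness to get $rbva=1$, and concludes $b=rbvab=0$, hence $u=0$. This is exactly your ``invertibility of $a$ alone suffices'' route, so the left inverse of $b$ in Steps 1--2 is unnecessary. One small nit: in the zero ring the lemma actually fails ($0$ is a unit and $0=0\cdot 0\cdot 0$ with $0\cdot 0=0$), so instead of saying there is ``nothing to prove'' when $R=0$, you should say that $1\neq 0$ is tacitly assumed, as the paper also does.
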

			\begin{proof}
				Let $u\in U(R)\cap Z_i(R)$. Then, $u=arb$, $ab=0,~ a,b,r\in R$. Since $u\in U(R)$,  $arbv=1$  for some $v\in R$. Since $R$ is directly finite, $rbva=1$, which implies $0=rbvab=b$. So, $u=0$, a contradiction.  
			\end{proof}
			
			\begin{proposition}
				Let $K$ be a field. Then, all non-unit elements of $L_K(A_2)$ are zero-insertive.
			\end{proposition}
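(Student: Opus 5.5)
The plan is to use the standard isomorphism $L_K(A_2)\cong M_2(K)$. Under it, $v_1\mapsto E_{11}$, $e_1\mapsto E_{12}$, $e_1^*\mapsto E_{21}$ and $v_2\mapsto E_{22}$. One checks directly that these matrices satisfy relations $(V)$, $(E1)$, $(E2)$, $(CK1)$ and $(CK2)$. Here $v_1$ is the only regular vertex, and $(CK2)$ reads $v_1=e_1e_1^*$, which holds since $E_{12}E_{21}=E_{11}$. The induced map is onto, since its image contains every matrix unit. It is injective because $L_K(A_2)$ is spanned by $v_1,v_2,e_1,e_1^*$, so it has dimension at most $4$. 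Zero-insertivity is preserved by ring isomorphisms, so it suffices to prove that every non-unit of $M_2(K)$ is zero-insertive.

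A non-unit $x\in M_2(K)$ has rank $0$ or $1$. If $x=0$, write $x=0\cdot 0\cdot 0$ with $a=b=0$, so $ab=0$.

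If $x$ has rank $1$, write $x=uw^{T}$ with nonzero column vectors $u,w\in K^2$. Choose invertible $P,Q$ with $Pe_1=u$ and $Q^{T}e_2=w$, where $e_1,e_2$ here denote the standard basis vectors of $K^2$ (not the edge $e_1$). Then
\[
x=P(e_1e_2^{T})Q=PE_{12}Q .
\]
Since $E_{12}=E_{11}E_{12}E_{22}$, we get
\[
x=(PE_{11})\,E_{12}\,(E_{22}Q).
\]
Take $a=PE_{11}$, $r=E_{12}$ and $b=E_{22}Q$. Then $ab=PE_{11}E_{22}Q=0$, so $x$ is zero-insertive.

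Equivalently, one can argue without matrices: any rank-one element is conjugate-equivalent to the edge $e_1=v_1e_1v_2$, and $v_1v_2=0$.

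The main obstacle is justifying the isomorphism $L_K(A_2)\cong M_2(K)$. This is a standard fact from \cite{Lvt}: for a finite acyclic graph, $L_K(E)$ is a direct sum of matrix rings indexed by the sinks. I will cite it, and also include the short direct verification above, which consists of the relation check, surjectivity onto the matrix units, and the dimension bound. The rank factorization step is routine linear algebra.
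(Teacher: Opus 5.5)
Your proof is correct, but it runs in the opposite direction from the paper's. The paper stays inside $L_K(A_2)$. It writes $x=a_1v_1+a_2v_2+a_3e_1+a_4e_1^*$ using the spanning set $\{v_1,v_2,e_1,e_1^*\}$. It exhibits an explicit inverse when $a_1a_2\neq a_3a_4$, so every non-unit satisfies $a_1a_2=a_3a_4$. It then gives explicit witnesses by cases: $x=(a_1v_1+a_4e_1^*)\,e_1\,(e_1^*+a_1^{-1}a_3v_2)$ if $a_1\neq 0$; and if $a_1=0$ (so $a_3=0$ or $a_4=0$), $x=v_2\,e_1^*\,(a_2e_1+a_4v_1)$ or $x=(a_2e_1^*+a_3v_1)\,e_1\,v_2$ respectively. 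The isomorphism $L_K(A_2)\cong M_2(K)$ appears only afterwards, to deduce the $M_2(K)$ statement as a corollary.

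You transport to $M_2(K)$ first. A single rank factorization $x=PE_{12}Q=(PE_{11})E_{12}(E_{22}Q)$ then replaces both the determinant condition and the case split.

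The paper's route gives concrete $a,r,b$ written in vertices and ghost edges, and needs no isomorphism. Yours is shorter and more conceptual, and it also shows why the result is special to $A_2$. If $x=arb$ with $ab=0$ in $M_n(K)$, then $2\operatorname{rank}x\le\operatorname{rank}a+\operatorname{rank}b\le n$. Conversely, the same reduction to a normal form $PDQ$ shows that every matrix of rank at most $n/2$ is zero-insertive. For $n=2$ these are exactly the non-units. For $n\ge 3$, a singular matrix such as $E_{11}+E_{22}\in M_3(K)$ is not zero-insertive, so the analogue for $L_K(A_n)$ fails.
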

			\begin{proof}
				
				By \cite[Lemma 1.2.12]{Lvt}, we obtain $L_K(A_2)=\{a_1v_1+a_2v_2+a_3e_1+a_4e_1^*|a_i\in K \}$. 
				Observe that $x=a_1v_1+a_2v_2+a_3e_1+a_4e_1^*\in U(L_K(E))$ with $x^{-1}=(a_1a_2-a_3a_4)^{-1}(a_2v_1+a_1v_2-a_3e_1-a_4e_1^*)$ whenever $a_1a_2\neq a_3a_4$. By Lemma \ref{zdf} and by \cite[Theorem 3.7]{Ldf},  $x\notin Z_i(L_K(A_2))$.  
				Suppose $a_1a_2=a_3a_4$. \\
				Case 1: $a_1\neq 0$. Then, $x=a_1v_1+a_2v_2+a_3e_1+a_4e_1^*=(a_1v_1+a_4e_1^*)e_1(e_1^*+a_1^{-1}a_3v_2)\in Z_i(L_K(A_n))$.\\
				Case II: $a_1=0$. Then, either $a_3=0$ or $a_4=0$. If $a_3=0$, then, \\
				$x=a_2v_2+a_4e_1^*=v_2e_1^*(a_2e_1+a_4v_1)\in Z_i(L_K(A_n))$. If $a_4=0$, then $x=a_2v_2+a_3e_1=(a_2e_1^*+a_3v_1)e_1v_2\in Z_i(L_K(A_n))$.
			\end{proof}
			\begin{lemma}\cite[Proposition 1.3.5]{Lvt} \label{L2m}
				Let $K$ be a field. Then, $M_n(K)\cong L_K(A_n)$ via the $K$-algebra isomorphism $\Phi$ defined by $\Phi(v_i)=E_{ii}, \Phi(e_i)=E_{i,i+1}, \Phi(e_i^*)=E_{i+1,i}$.
			\end{lemma}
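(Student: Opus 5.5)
The plan is to follow the standard construction of the isomorphism $L_K(A_n)\cong M_n(K)$: define $\Phi$ on the generators, check that it respects the defining relations so that it extends to a $K$-algebra homomorphism, and then show that it is bijective. Since $L_K(A_n)$ is the free associative $K$-algebra on $E^0\cup E^1\cup (E^1)^*$ modulo the relations (V), (E1), (E2), (CK1), (CK2), the universal property reduces the first part to checking that the matrices $E_{ii}$, $E_{i,i+1}$ and $E_{i+1,i}$ satisfy these relations in $M_n(K)$. I would do this with the rule $E_{ij}E_{kl}=\delta_{jk}E_{il}$.

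The relation checks go one at a time. For (V), $E_{ii}E_{jj}=\delta_{ij}E_{ii}$. For (E1), $s(e_i)=v_i$ and $r(e_i)=v_{i+1}$, and indeed $E_{ii}E_{i,i+1}=E_{i,i+1}=E_{i,i+1}E_{i+1,i+1}$. Relation (E2) is checked the same way for $E_{i+1,i}$. For (CK1), $E_{i+1,i}E_{j,j+1}=\delta_{ij}E_{i+1,i+1}=\delta_{ij}\Phi(r(e_i))$. For (CK2), the regular vertices are $v_1,\dots,v_{n-1}$, each emitting exactly one edge, and $E_{i,i+1}E_{i+1,i}=E_{ii}$. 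The sink $v_n$ imposes no (CK2) condition.

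Surjectivity then follows because the image of $\Phi$ contains every matrix unit. For $i<j$ we have $E_{ij}=E_{i,i+1}E_{i+1,i+2}\cdots E_{j-1,j}=\Phi(e_i\cdots e_{j-1})$, and the case $i>j$ is handled symmetrically using ghost paths. Together with the diagonal units $E_{ii}=\Phi(v_i)$, these span $M_n(K)$.

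Injectivity is the main obstacle. I would handle it by bounding the dimension: by the same reasoning as \cite[Lemma 1.2.12]{Lvt}, which was already used above for $A_2$, $L_K(A_n)$ is spanned by elements $\mu\nu^*$ with $\mu,\nu$ paths and $r(\mu)=r(\nu)$. Relation (CK2) gives $e_ie_i^*=v_i$ for $i<n$, so every path can be pushed to end at the sink $v_n$, using $\mu=\mu\, e_j e_j^*$ repeatedly. Hence $L_K(A_n)$ is spanned by the $n^2$ elements $p_ip_j^*$, where $p_i$ denotes the unique path from $v_i$ to $v_n$. It follows that $\dim_K L_K(A_n)\le n^2=\dim_K M_n(K)$. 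A surjective linear map onto a space of dimension $n^2$ from a space of dimension at most $n^2$ is bijective. Since $\Phi(p_ip_j^*)=E_{in}E_{nj}=E_{ij}$, this is consistent with the computation above, and so $\Phi$ is an isomorphism.

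The delicate point is the reduction showing that $\{\mu\nu^*\}$ spans. This needs (CK1) to cancel ghost–real products, so that any word in the generators collapses to the form $\mu\nu^*$. I would cite this standard fact rather than re-prove it.
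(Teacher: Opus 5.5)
Your argument is correct and is essentially the standard proof behind the citation \cite[Proposition 1.3.5]{Lvt}; the paper itself gives no proof of this lemma beyond that citation. Checking the relations gives the homomorphism, and the matrix units give surjectivity. The spanning set $\{p_ip_j^* : 1\leq i,j\leq n\}$, obtained from \cite[Lemma 1.2.12]{Lvt} together with (CK2), gives $\dim_K L_K(A_n)\leq n^2$ and hence injectivity.
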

			
			\begin{corollary}
				All non-unit elements of $M_2(K)$ are zero-insertive for any field $K$.
			\end{corollary}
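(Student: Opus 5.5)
The corollary follows by transporting the preceding proposition along the isomorphism of Lemma \ref{L2m}. The plan is to observe that zero-insertivity and being a non-unit are both preserved by ring isomorphisms. Concretely, let $\Phi: L_K(A_2)\to M_2(K)$ be the $K$-algebra isomorphism of Lemma \ref{L2m}. Take a non-unit $X\in M_2(K)$ and set $x=\Phi^{-1}(X)$. Since $\Phi^{-1}$ is a unital ring isomorphism, $x$ is a non-unit of $L_K(A_2)$; otherwise $X=\Phi(x)$ would have inverse $\Phi(x^{-1})$. By the preceding proposition, $x=arb$ with $a,b,r\in L_K(A_2)$ and $ab=0$. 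Applying $\Phi$ gives $X=\Phi(a)\Phi(r)\Phi(b)$ with $\Phi(a)\Phi(b)=\Phi(ab)=0$, so $X\in Z_i(M_2(K))$.

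The only step needing care is that $\Phi$ preserves the identity, so that units correspond to units. This holds because $L_K(A_2)$ has identity $v_1+v_2$, and $\Phi(v_1+v_2)=E_{11}+E_{22}=I_2$.

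As an independent check, or as an alternative direct proof, one can exhibit the factorization concretely. A non-unit $X\in M_2(K)$ has rank at most $1$. If $X=0$, then $X=0\cdot 0\cdot 0$ trivially. Otherwise $X=uw^T$ for nonzero column vectors $u,w$. Choose a nonzero $c$ with $c^Tu=0$, possible since $n=2$, and put $a=uc^T$, $b=w'w^T$, where $w'$ is chosen with $c^Tw'=0$ and $r$ is chosen with $c^Trw'=1$. Then $ab=u(c^Tw')w^T=0$ and $arb=u(c^Trw')w^T=X$.

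There is essentially no obstacle here; the transport argument suffices and is the one I would present.
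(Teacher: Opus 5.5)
Your transport argument is correct and is exactly how the paper gets this corollary: the proposition on $L_K(A_2)$ is pushed through the isomorphism $\Phi$ of Lemma \ref{L2m}, and the corollary is stated right after that lemma with no separate proof. In your optional direct check you should also require $w'\neq 0$ (for instance take $w'=u$), since otherwise no $r$ with $c^Trw'=1$ exists; this does not affect your main argument.
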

			\begin{lemma}\label{Z_ip}
				Let $K$ be a field. Then, $a_1v_1+\sum_{i=2}^na_i(\prod_{j=1}^{i-1}e_{j})\in L_K(A_n)$ is zero-insertive, $a_i\in K$.
			\end{lemma}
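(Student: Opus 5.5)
The plan is to exhibit the factorization $x=arb$ with $ab=0$ explicitly, working directly with the Leavitt path algebra relations. Put
\[
x=a_1v_1+\sum_{i=2}^n a_i\Bigl(\prod_{j=1}^{i-1}e_j\Bigr).
\]
The key observation is that every summand of $x$ is a path starting at $v_1$. Hence $v_1x=x$.

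The candidate factorization is
\[
a=v_1,\qquad r=e_1,\qquad b=e_1^*x .
\]

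First, $ab=v_1e_1^*x$. By relation $(E2)$ we have $e_1^*=r(e_1)e_1^*=v_2e_1^*$. So $v_1e_1^*=v_1v_2e_1^*=0$ by relation $(V)$, and therefore $ab=0$.

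Second, $arb=v_1e_1e_1^*x$. In $A_n$ the vertex $v_1$ is regular and emits exactly one edge, namely $e_1$. Relation $(CK2)$ therefore gives $e_1e_1^*=v_1$. Consequently
\[
arb=v_1v_1x=v_1x=x .
\]
This shows that $x\in Z_i(L_K(A_n))$. The factorization is uniform in the coefficients $a_i$, so no case split is needed.

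As a sanity check, one can pass through the isomorphism $\Phi$ of Lemma \ref{L2m}. Under $\Phi$, the element $x$ becomes the matrix whose first row is $(a_1,\dots,a_n)$ and whose other rows are zero. The factorization above then reads
\[
x=E_{11}\cdot E_{12}\cdot\Bigl(\sum_j a_jE_{2j}\Bigr),
\]
and indeed $E_{11}\sum_j a_jE_{2j}=0$.

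I do not expect a serious obstacle. The only points needing care are two verifications. The first is that $v_1$ is regular with $s^{-1}(v_1)=\{e_1\}$, which requires $n\ge 2$ and justifies the use of $(CK2)$. The second is that $v_1x=x$, which follows from $(E1)$ because $s(e_1)=v_1$.
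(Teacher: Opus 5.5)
Your proof is correct and is essentially the paper's argument. The paper also takes $a=v_1$ and $b=\mu^{*}x$ (its $b=\sum_{i=1}^{n-1}a_i(e_i\cdots e_{n-1})^{*}+a_nv_n$ is exactly this), but with $r=\mu=e_1e_2\cdots e_{n-1}$, the full path to the sink, so collapsing $\mu\mu^{*}=v_1$ needs (CK2) at each of $v_1,\dots,v_{n-1}$, whereas your choice $\mu=e_1$ needs it only at $v_1$; the two factorizations coincide for $n=2$, and in matrix terms the paper writes the first-row matrix $x$ as $E_{11}E_{1n}(E_{n1}x)$ while you write it as $E_{11}E_{12}(E_{21}x)$.
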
 
			\begin{proof}
				Observe that $v_1e_{n-1}^*=0$. Hence, 
				$x=a_1v_1+\sum_{i=2}^na_i(\prod_{j=1}^{i-1}e_{j})=v_1(\prod_{j=1}^{n-1}e_j)(\sum_{i=1}^{n-1}[a_i(\prod_{k=i}^{n-1}e_k)^*]+a_nr(e_{n-1}))\in Z_i(L_K(A_n))$. 
			\end{proof}

			\newpage
			In \cite{Diesl}, Diesl introduced the concept of nil clean rings (also see \cite{ncmr}, \cite{Kosan}). An element $x$ of a ring  $R$ is said to be nil-clean  if $x=e+a$, where $e\in E(R), a\in N(R)$. In any path algebra $KQ$ corresponding to the quiver $Q$, nil-clean elements occur naturally in abundance. Indeed, for any vertex $v\in Q^0$  and non-loop edge $f\in Q^1,$ $v+f\in KQ$ is nil-clean. In the Leavitt path algebra
			$L_{\mathbb{F}_2}(A_n)$, we observe that each $v_i$ is  zero-insertive, as,
			\[
			\begin{cases}
				v_j=v_j e_j e_j^{*},\quad v_j e_j^{*}=0,
				& 1\leq j\leq n-1,\\[2mm]
				v_n=v_n e_{n-1}^{*}e_{n-1},\quad v_n e_{n-1}=0.
			\end{cases}
			\]
			In addition, for $2\leq i\leq n$,
			\[
			v_i=e_{i-1}^{*}+\bigl(e_{i-1}^{*}+v_i\bigr),
			\]
			while
			\[
			v_1=e_1+\bigl(e_1+v_1\bigr),
			\]
			where, in each case, the first summand is nilpotent and the second is
			idempotent, that is, $v_i$ is nil-clean for each $i$. Hence, each zero-insertive element $v_i$ is  nil-clean. This naturally raises the following question: is every zero-insertive element of $L_{\mathbb{F}_2}(A_n)$ essentially nil-clean ? The following theorem answers this question.		
			\begin{theorem}\label{dsnc}
				Let $K$ be a field and $n$ be an integer $\geq 2$. Then, each zero-insertive element of $L_K(A_n)$ is nil-clean if and only if $K\cong \mathbb{F}_2$. 
			\end{theorem}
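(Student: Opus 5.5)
Via Lemma \ref{L2m}, I would work throughout in $M_n(K)$. The first step is to identify $Z_i(M_n(K))$. Since $M_n(K)$ is directly finite, Lemma \ref{zdf} says no unit is zero-insertive. Conversely, I claim every singular matrix $x$ is zero-insertive. If $x$ is singular, write $x=PDQ$ with $P,Q$ invertible and $D=\sum_{i=1}^{k}E_{ii}$, where $k=\operatorname{rank}x<n$. Then $D=D\cdot D\cdot D$, so I look for $a,b$ with $ab=0$, $aD$ having the same column space as $D$, and $Db$ having the same row space as $D$. A concrete choice is $a=D$ and $b=D+E_{nn}$ with $r$ suitably modified, but $Db=D$ forces $ab=D\ne0$. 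So I would instead use a cyclic shift. Let $S=\sum_{i=1}^{n-1}E_{i,i+1}+E_{n,1}$, let $a=DS^{m}$, and let $b=S^{-m}\cdot(\cdots)$, arranging the supports to be disjoint; this uses the slack coming from $k<n$.

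A cleaner route is to argue directly. Choose a nonzero vector $w$ with $xw=0$ and let $a=x$, $b'=I$. Then I want $x=x\,r\,b$ with $xb=0$. Take any $y$ with $xyx=x$ (a generalized inverse). Then $x=x\,y\,x$, but $x\cdot x\ne0$ in general. Instead write $x=x(yx)$ and choose $b$ to have the same row space as $x$ while its column space lies in $\ker x$. This is possible exactly when $\ker x\neq0$: pick an invertible $g$ mapping $\operatorname{im}(x^T\text{-row space})$ appropriately, namely $b=u\cdot\phi$ with $u\in\ker x$. The general statement I would prove is the following: given $x$ of rank $k<n$, there exist $b$ with $xb=0$ and row space of $b$ equal to the row space of $x$ (possible since $\dim\ker x\ge1$, and rank $1$ pieces suffice: write $x=\sum_i c_i\rho_i$ and $b=\sum_i u\,\rho_i$ collapsed). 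Then $x=x r b$ for some $r$, because $\operatorname{row}(x)=\operatorname{row}(b)$ gives $x=tb$ for some $t$, and $t$ can be taken of the form $xr$ after adjusting, by column space considerations. I expect this construction of $Z_i(M_n(K))=\{\text{singular matrices}\}$ to be the main technical step. The case $n=2$ is exactly the earlier Proposition, which gives reassurance, and Lemma \ref{Z_ip} handles a typical rank-one case.

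With $Z_i=$ singular matrices established, the direction ($\Leftarrow$) reduces to showing that every singular matrix over $\mathbb{F}_2$ is nil-clean. Every matrix over $\mathbb{F}_2$ is nil-clean; this is the theorem of Breaz--C\u{a}lug\u{a}reanu--Danchev--Micu that $M_n(\mathbb{F}_2)$ is a nil clean ring (cited via \cite{ncmr}/\cite{Kosan}). So this direction is immediate.

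For ($\Rightarrow$), suppose $K\ne\mathbb{F}_2$ and pick $\lambda\in K\setminus\{0,1\}$. Take $x=\lambda E_{11}$, which is singular for $n\ge2$ and hence zero-insertive; explicitly, $x=\lambda v_1=v_1e_1(\lambda e_1^*)$ with $v_1e_1^*=0$. Suppose $x=e+a$ with $e$ idempotent and $a$ nilpotent. Taking traces, $\operatorname{tr}e=\operatorname{rank}e\cdot1_K=\lambda$. This already excludes many $\lambda$ but not all, since in characteristic $p$ the element $\lambda$ could lie in the prime field. A sharper argument: the eigenvalues of $e+a$ versus those of $e$. Since $x$ is diagonalizable with eigenvalues $\lambda,0,\dots,0$, consider $x-e=a$ nilpotent. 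Restricting to the $\lambda$-eigenline does not commute, so instead I use the known fact that a nil-clean matrix over a field has a characteristic polynomial with roots forced: by Breaz et al., if $A\in M_n(K)$ is nil-clean then $A$ is similar to a block form whose relevant part gives $(x^2-x)^n$ nilpotent modulo... More concretely, $(x-e)$ nilpotent with $e$ idempotent implies $x^2-x$ is nilpotent? That fails in general. So I would fall back on a direct computation: with $e$ of rank $r$, write $e=\begin{pmatrix}I_r&0\\0&0\end{pmatrix}$ up to conjugation, conjugate $x$ accordingly to a rank-one matrix $uv^T$ with $v^Tu=\lambda$, and show that $uv^T-e$ nilpotent forces $\lambda\in\{0,1\}$. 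This is done by computing the characteristic polynomial of $uv^T-e$ via the matrix determinant lemma: $\det(tI-e+... )$ gives $(t+1)^r t^{n-r}$ times a factor $1-v^T(tI+e)^{-1}u$, which must equal $t^n$. Comparing these forces $r\le1$ and $\lambda\in\{0,1\}$, a contradiction.
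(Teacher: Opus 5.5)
Your proof of the theorem is correct once you drop the opening characterization, which is false but never used; the $(\Rightarrow)$ direction then takes a different, self-contained route from the paper's.

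\textbf{The unused step is false.} The step you call ``main'', $Z_i(M_n(K))=\{\text{singular matrices}\}$, fails for $n\ge 3$. Suppose $x=arb$ with $ab=0$. Then $\operatorname{col}(b)\subseteq\ker a$ gives $\operatorname{rank}a+\operatorname{rank}b\le n$, while $\operatorname{rank}x\le\min(\operatorname{rank}a,\operatorname{rank}b)$. Hence every zero-insertive matrix has rank at most $n/2$. For example, $E_{11}+E_{22}\in M_3(K)$ is singular but not zero-insertive. In fact $Z_i(M_n(K))$ is exactly the set of matrices of rank $\le n/2$, which coincides with the singular ones only when $n=2$. Your sketch breaks exactly here. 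A $b$ whose columns lie in $\ker x$ has rank at most $n-k$, so it cannot share the $k$-dimensional row space of $x$ when $k>n/2$. Collapsing the pieces into $b=u\sum_i\rho_i$ just produces a rank-one $b$.

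\textbf{Why it does not matter.} For $(\Leftarrow)$ you need no description of $Z_i$ at all: $M_n(\mathbb{F}_2)$ is a nil-clean ring, and this is exactly the paper's argument. For $(\Rightarrow)$, your explicit factorization $\lambda v_1=v_1e_1(\lambda e_1^*)$ with $v_1e_1^*=0$ suffices; in matrix form this is $\lambda E_{11}=E_{11}E_{12}(\lambda E_{21})$ with $E_{11}E_{21}=0$. So delete that paragraph and the words ``singular \dots\ and hence zero-insertive''.

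\textbf{Comparison for $(\Rightarrow)$.} The paper uses the same test element $\lambda v_1\leftrightarrow\lambda E_{11}$, obtained via Lemma \ref{Z_ip}. It then simply cites \cite[Theorem 3]{everymatrix} for the fact that this element is not nil-clean. You prove that fact directly, and your computation checks out:
\begin{itemize}
\item Conjugate $e$ to $\operatorname{diag}(I_r,0)$ and write $x=uv^T$ with $v^Tu=\lambda$.
\item Put $\beta=\sum_{i>r}u_iv_i$. For $1\le r\le n-1$ the determinant lemma gives $\chi_{x-e}(t)=(t+1)^{r-1}t^{n-r-1}\bigl(t^2+(1-\lambda)t-\beta\bigr)$.
\item Equating this with $t^n$ forces $r=1$, $\lambda=1$ and $\beta=0$.
\item If $r=0$, nilpotency forces $\lambda=0$.
\item If $r=n\ge2$, then $(t+1)^{n-1}$ divides $\chi_{x-e}$, which is impossible.
\end{itemize}
The paper's route is shorter but outsources the crux. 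Yours is elementary and proves slightly more: a rank-one matrix over any field is nil-clean only if its trace is $0$ or $1$, i.e., only if it is already nilpotent or idempotent.
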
 
			
			\begin{proof}
				Suppose each element of $Z_i(L_K(A_n))$ is a nil-clean. Assume, if possible, that $K\not\cong\mathbb{F}_2$. Then, there exists $a \in K\setminus \{0,1\}$.
				By Lemma \ref{Z_ip},   $av_1\in Z_i(L_K(A_n))$. So, $av_1$ is nil-clean. Observe that by lemma \ref{L2m}, $av_1$ correspond to $aE_{11}\in M_n(K)$. By \cite[Theorem 3]{everymatrix}, our assumption leads to a contradiction. Hence, $K\cong \mathbb{F}_2$. On the other hand, the converse follows from \cite[Theorem 3]{ncmr} and \cite[Proposition 1.3.5]{Lvt}. 
			\end{proof}
			\begin{definition}
				We call a ring $R$ \textit{zero-insertive nil-clean} ($ZINC$) if its zero-insertive elements are nil-clean.   
			\end{definition}
			\begin{corollary}
				$L_{\mathbb{F}_2}(A_n)$ is a ZINC ring for all positive integer $n\geq 2$.
			\end{corollary}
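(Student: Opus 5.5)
The corollary follows from Theorem \ref{dsnc} applied with $K=\mathbb{F}_2$, so the plan is to check that this instance of the theorem holds on its own. By Lemma \ref{L2m}, $L_{\mathbb{F}_2}(A_n)\cong M_n(\mathbb{F}_2)$ via $\Phi$. Any ring isomorphism carries zero-insertive elements to zero-insertive elements, since $x=arb$ with $ab=0$ maps to $\Phi(a)\Phi(r)\Phi(b)$ with $\Phi(a)\Phi(b)=0$. It also preserves idempotents and nilpotents, hence nil-clean elements. So it suffices to show that every zero-insertive element of $M_n(\mathbb{F}_2)$ is nil-clean.

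For this I will prove the stronger fact that every element of $M_n(\mathbb{F}_2)$ is nil-clean. This is the result of \cite[Theorem 3]{ncmr}, which states that $M_n(\mathbb{F}_2)$ is a nil clean ring; the converse direction in the proof of Theorem \ref{dsnc} already invokes it. If I wanted to avoid citing it, the steps would be as follows.
\begin{itemize}
\item[(i)] Reduce to rational canonical form. Nil-cleanness is preserved under conjugation, so it suffices to treat a block diagonal matrix whose blocks are companion matrices.
\item[(ii)] Decompose each block. A companion matrix $C$ of size $m$ over $\mathbb{F}_2$ is written as $C=e+q$ with $e$ idempotent and $q$ nilpotent, following Breaz--C\u{a}lug\u{a}reanu--Danchev--Micu.
\end{itemize}
The hardest step is (ii). Block decompositions do not combine naively: the direct sum of nil-clean blocks is nil-clean blockwise, but the individual companion blocks must be handled carefully when $m$ is small or the constant coefficient is $1$. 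This is why I would rely on the cited theorem rather than reprove it.

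Combining these steps, every zero-insertive element of $L_{\mathbb{F}_2}(A_n)$ is nil-clean for each $n\ge 2$. Hence $L_{\mathbb{F}_2}(A_n)$ is a ZINC ring; in fact it is a nil clean ring.
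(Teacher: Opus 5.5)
Your proposal is correct and follows the paper's route: the corollary is the case $K=\mathbb{F}_2$ of Theorem~\ref{dsnc}, and the paper proves that direction by transporting along the isomorphism $L_{\mathbb{F}_2}(A_n)\cong M_n(\mathbb{F}_2)$ of Lemma~\ref{L2m} and invoking \cite[Theorem 3]{ncmr} that $M_n(\mathbb{F}_2)$ is nil clean, exactly as you do. Your optional rational-canonical-form sketch is unnecessary once that theorem is cited, and your closing observation that $L_{\mathbb{F}_2}(A_n)$ is in fact a nil clean ring is correct.
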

			
			\begin{lemma}\label{wsn}
				Let $R$ be a  ZINC ring. If $R$ has no non-trivial idempotent elements, then $Z_i(R)\subseteq N(R)$.
			\end{lemma}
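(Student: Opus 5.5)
Let $x\in Z_i(R)$, say $x=arb$ with $a,b,r\in R$ and $ab=0$. Since $R$ is ZINC, $x$ is nil-clean, so $x=e+w$ with $e\in E(R)$ and $w\in N(R)$. Because $R$ has no non-trivial idempotents, either $e=0$ or $e=1$. If $e=0$, then $x=w$ is nilpotent and we are done. The whole argument therefore reduces to ruling out $e=1$, or more precisely to showing that $e=1$ still forces $x$ to be nilpotent.

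Suppose $e=1$. Then $x=1+w$ with $w$ nilpotent, so $x$ is a unit, with inverse $\sum_{k\ge 0}(-w)^k$. From $x=arb$ and $ab=0$ we get
\[
(xa)\cdot(rb)\cdot 0 \;=\; 0 \qquad\text{and}\qquad x^2=arbarb .
\]
We want to exploit $ba$ or $ab=0$ directly. Since $x$ is a unit, the relation $arbx^{-1}=1$ says that $a$ has a right inverse, namely $rbx^{-1}$. Hence $b=(arbx^{-1})\,b$, but this alone does not kill $b$, because $ab=0$ multiplies on the wrong side. Lemma \ref{zdf} needs direct finiteness, and the hypothesis does not obviously give it. So I would instead use idempotents. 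Put $f=rbx^{-1}a$. Then
\[
f^2=rbx^{-1}(arbx^{-1})a=rbx^{-1}a=f,
\]
so $f\in E(R)$, and therefore $f=0$ or $f=1$.

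If $f=1$, then $b=fb=rbx^{-1}ab=0$, so $x=0$, which contradicts $x$ being a unit unless $R=0$. If $f=0$, then $1=(arbx^{-1})^2=a(rbx^{-1}a)rbx^{-1}=afrbx^{-1}=0$, again a contradiction. Hence $e=1$ is impossible in a nonzero ring, and $x\in N(R)$.

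The main obstacle is this middle step: getting from "unit" to a contradiction without assuming direct finiteness. The key idea I would commit to is manufacturing the idempotent $rbx^{-1}a$ from the one-sided inverse, and then using the absence of non-trivial idempotents in place of direct finiteness. The same trick shows that rings without non-trivial idempotents are directly finite, since for $ab=1$ the element $ba$ is idempotent. So one could equally invoke Lemma \ref{zdf} after noting this.
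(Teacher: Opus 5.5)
Your proof is correct and follows the paper's route: write $x=1+h$, observe that $x$ is a unit, and conclude $b=0$ exactly as in Lemma \ref{zdf}. The only difference is that the paper cites the fact that Abelian rings (in particular rings with only trivial idempotents) are directly finite. You instead prove the needed instance inline via the idempotent $rbx^{-1}a$, which is the standard argument behind that fact.
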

			\begin{proof}
				Let $x\in Z_i(R)$. Then, $x=arb$ for some $a,b,r\in R$ and $ab=0$. Since $R$ is ZINC, either  $x$ is nilpotent or $x=1+h$ for some $h\in N(R)$. It is well known that an Abelian ring (that is, a ring in which all idempotents are central) is directly finite. Hence, if $x=1+h$, then $x\in U(R)$, which is a contradiction by Lemma \ref{zdf}. 
				
			\end{proof}

			However, the converse is not true, for example, $\mathbb{Z}_6$ is ZINC and $Z_i(\mathbb{Z}_6)=0\subseteq N(\mathbb{Z}_6)$ but $E(\mathbb{Z}_6)=\{0,1,3,4\}.$

			\begin{proposition}
				Suppose $R$ is a ring with no non-trivial idempotent. Then, for $n\geq 1$, $T_n(R)$ is ZINC if and only if $Z_i(T_n(R))\subseteq N(T_n(R))$.
			\end{proposition}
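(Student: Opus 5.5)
The direction $(\Leftarrow)$ is immediate. If $Z_i(T_n(R))\subseteq N(T_n(R))$, then every zero-insertive element $x$ can be written as $x=0+x$. Here $0\in E(T_n(R))$ and $x$ is nilpotent, so $x$ is nil-clean and $T_n(R)$ is ZINC.

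For $(\Rightarrow)$, Lemma \ref{wsn} cannot be applied directly: $T_n(R)$ has non-trivial idempotents (for instance $E_{11}$) whenever $n\geq 2$. The plan is instead to push everything down to the diagonal. We use the diagonal map $\pi_i:T_n(R)\to R$, $X\mapsto X_{ii}$, which is a unital ring homomorphism for each $1\leq i\leq n$ because the matrices are upper triangular.

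Let $x\in Z_i(T_n(R))$, say $x=arb$ with $ab=0$. Applying $\pi_i$ gives
\[
x_{ii}=a_{ii}r_{ii}b_{ii}, \qquad a_{ii}b_{ii}=0,
\]
so $x_{ii}\in Z_i(R)$. Since $T_n(R)$ is ZINC, $x=e+h$ with $e\in E(T_n(R))$ and $h\in N(T_n(R))$. Applying $\pi_i$ again gives $x_{ii}=e_{ii}+h_{ii}$, where $e_{ii}\in E(R)$ and $h_{ii}\in N(R)$, since homomorphisms preserve idempotents and nilpotents.

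By hypothesis $e_{ii}\in\{0,1\}$. Suppose $e_{ii}=1$. Then $x_{ii}=1+h_{ii}$ is a unit of $R$. A ring with only trivial idempotents is Abelian and hence directly finite, as already noted in the proof of Lemma \ref{wsn}. So Lemma \ref{zdf} forbids $x_{ii}$ from being both a unit and zero-insertive, a contradiction. Hence $e_{ii}=0$, and every diagonal entry $x_{ii}=h_{ii}$ is nilpotent in $R$.

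It remains to show that an upper triangular matrix with nilpotent diagonal is nilpotent. Choose $m$ with $x_{ii}^m=0$ for all $i$. The diagonal of $x^m$ is $(x_{11}^m,\dots,x_{nn}^m)=0$, so $x^m$ is strictly upper triangular. Therefore $(x^m)^n=0$, and $x\in N(T_n(R))$.

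The only delicate point is the step that rules out $e_{ii}=1$. It relies on $R$ being directly finite, which follows from $R$ having no non-trivial idempotents, together with Lemma \ref{zdf}. The degenerate case $R=0$ is trivial.
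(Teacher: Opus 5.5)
Your proof is correct and takes essentially the same route as the paper. You project to the diagonal entries, observe that $x_{ii}\in Z_i(R)$ and $x_{ii}-e_{ii}\in N(R)$ with $e_{ii}\in\{0,1\}$, and rule out $e_{ii}=1$ by direct finiteness and Lemma \ref{zdf}, which is the argument of Lemma \ref{wsn}; you also make explicit the step the paper leaves implicit, namely that an upper triangular matrix with nilpotent diagonal is nilpotent.
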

			\begin{proof}
				Suppose $T_n(R)$ is ZINC, that is, $Z_i(T_n(R))\subseteq E(T_n(R))+N(T_n(R))$.	Let $P=\begin{pmatrix}
					p_{11} &  p_{12}&  \dots &  p_{1n}\\
					0 &  p_{22} & \dots &  p_{2n}\\
					\vdots & \vdots& \ddots & \vdots \\
					0 & 0 & \dots &  p_{nn}
				\end{pmatrix}\vspace{0.2cm}\in Z_i(T_n(R))$. This implies that each $p_{ii}\in Z_i(R)$, $1\leq i\leq n$.  As $Z_i(T_n(R))\subseteq E(T_n(R))+N(T_n(R))$, there exists $F=(f_{ij})\in E(T_n(R))$ such that $P-F\in N(T_n(R))$, that is, $p_{ii}-f_{ii}\in N(R)$, $1\leq i\leq n$. By hypothesis, $f_{ii}=0$ or $1$. By the proof of Lemma \ref{wsn}, we get that $p_{ii}\in N(R)$, that is, $P\in N(T_n(R))$. So, $Z_i(T_n(R))\subseteq N(T_n(R))$. Whereas the converse is self-evident.             
			\end{proof}

			According to (\cite{sjc}), $a\in R$ is said to be strongly J-clean  if it can be expressed as the sum of an idempotent element and an element in its Jacobson radical that commute. A ring $R$ is said to be J-clean if for any $x\in R$, $x=e+j$ for some $e\in E(R), j\in J(R)$.
			
			\begin{theorem}
				Let $R$ be a J-clean ring. If $J(R)$ is nil, then  $Z_i(R)\subseteq N(R)$.
			\end{theorem}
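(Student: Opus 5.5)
The plan is to pass to the factor ring $R/J(R)$, use the J-clean hypothesis to show it is Boolean, and then lift the conclusion back to $R$ using that $J(R)$ is nil. Throughout, write $\bar{y}$ for the image of $y\in R$ in $R/J(R)$.

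\textbf{Step 1: $R/J(R)$ is Boolean.} Let $y\in R$. By the J-clean hypothesis, $y=e+j$ with $e\in E(R)$ and $j\in J(R)$. Hence $\bar y=\bar e$, and this is idempotent in $R/J(R)$. So every element of $R/J(R)$ is idempotent. A ring in which every element is idempotent is commutative by the standard argument: from $(u+u)^2=u+u$ one gets $2u=0$, and then expanding $(u+w)^2=u+w$ gives $uw=wu$. Consequently $R/J(R)$ is commutative, and in particular semicommutative.

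\textbf{Step 2: $Z_i(R)\subseteq J(R)$.} Let $x\in Z_i(R)$, say $x=arb$ with $ab=0$. Then $\bar a\bar b=0$ in $R/J(R)$. By commutativity,
\[
\bar x=\bar a\bar r\bar b=\bar r\bar a\bar b=0,
\]
so $x\in J(R)$. Equivalently, Step 1 gives $Z_i(R/J(R))=0$, and the image of a zero-insertive element is zero-insertive.

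\textbf{Step 3: conclusion.} Since $J(R)$ is nil by hypothesis, Step 2 gives $x\in N(R)$. Thus $Z_i(R)\subseteq N(R)$, i.e.\ $R$ is weakly semicommutative.

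The only step with real content is Step 1, namely that J-cleanness forces $R/J(R)$ to be Boolean and hence commutative. This is routine, so I do not expect a genuine obstacle. Commutativity is the one fact the argument needs, because it is what allows $\bar r$ to be moved past $\bar a$. The nil hypothesis on $J(R)$ is used only in Step 3, to pass from $J(R)$ to $N(R)$.
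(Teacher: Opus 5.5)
Your proof is correct, but it is organized differently from the paper's. The paper never passes to $R/J(R)$; it argues elementwise with $a=xry$, $xy=0$. First it applies the J-clean decomposition to the unit $1-yx$ (note $(yx)^2=0$). The idempotent $e_1$ with $1-yx-e_1\in J(R)$ must then be a unit, hence $e_1=1$, so $yx\in J(R)$ and therefore $a^2=xr(yx)ry\in J(R)$. Next, writing $a=e+j$, it expands $(a-e)^2$ to get $ae\in J(R)$, then $e=ae-je\in J(R)$, which forces $e=0$ and $a\in J(R)$. In effect, the paper reproves, only for the elements it needs, that $R/J(R)$ has no nonzero nilpotents, and it uses J-cleanness only for $a$ and $1-yx$. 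Your route replaces this computation with the structural fact that $R/J(R)$ is Boolean, hence commutative. That makes the key step a one-liner and shows that $Z_i(R)\subseteq J(R)$ holds for every J-clean ring, and more generally for any ring with $R/J(R)$ semicommutative. In both arguments, the nil hypothesis on $J(R)$ enters only in the final step.
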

			
			\begin{proof}
				Let $a\in Z_i(R)$. So,  $a=xry$  for some $x,y,r\in R$ with $xy=0$. By hypothesis, there exists $e\in E(R)$ such that $a-e\in J(R)$. If $e=0$, we are done. Assume, if possible, that $e\neq 0$. Since $yx\in N(R)$, $1-yx\in U(R)$. As $R$ is J-clean, $1-yx-e_1\in J(R)$ for some $e_1\in E(R)$. Therefore, $1-(1-yx)^{-1}e_1\in J(R)$. This yields that $(1-yx)^{-1}e_1\in U(R)$ and hence $e_1\in U(R)$, that is, $e_1=1$. This implies that $yx\in J(R)$. Hence, $a^2\in J(R)$. Observe that
				$(a-e)^2=a^2-ae-e(a-e)$.  So, $ae\in  J(R)$. Note that $a-e=j$ for some $j\in J(R)$. Hence $e=ae-je\in J(R)$, that is, $e=0$, a contradiction. Thus, $a\in J(R)$, that is, $Z_i(R)\subseteq N(R)$.
			\end{proof}
			
			\begin{proposition}\label{sj} 
				Let $R$ be a  ZINC ring.  If $ab=0$, then $aJ(R)b\subseteq N(R)$, $a,b\in R$.
				
			\end{proposition}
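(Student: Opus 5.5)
The idea is to use the ZINC hypothesis on the element $x=ajb$, and then use that $x$ lies in $J(R)$ to show that the idempotent part of its nil-clean decomposition must vanish.

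Fix $a,b\in R$ with $ab=0$ and $j\in J(R)$, and set $x=ajb$. By the definition of $Z_i(R)$, with $r=j$, we have $x\in Z_i(R)$. Since $R$ is ZINC, $x$ is nil-clean: write $x=e+h$ with $e\in E(R)$ and $h\in N(R)$. Because $J(R)$ is a two-sided ideal and $j\in J(R)$, we also have $x=ajb\in J(R)$.

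Now pass to the quotient $\bar R=R/J(R)$. There $\bar x=0$, so $\bar e=-\bar h$. The element $\bar e$ is idempotent, and $-\bar h$ is nilpotent since $h$ is. A nilpotent idempotent is zero: if $\bar e^m=0$ then $\bar e=\bar e^m=0$. Hence $e\in J(R)$.

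An idempotent lying in $J(R)$ is zero. Indeed, $1-e$ is then a unit, and $e(1-e)=0$ gives $e=0$. Therefore $x=h\in N(R)$. As $j\in J(R)$ was arbitrary, this gives $aJ(R)b\subseteq N(R)$.

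There is no substantial obstacle here. The only point requiring care is that the nil-clean decomposition is not assumed to be commuting. The argument avoids needing commutativity by working modulo $J(R)$, where $\bar x=0$ forces $\bar e=-\bar h$ directly, with no commutation relation between $e$ and $h$ used.
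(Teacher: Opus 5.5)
Your proof is correct and follows essentially the same route as the paper. Both arguments write $ajb=e+h$ with $ajb\in J(R)$ and show $e\in J(R)$, hence $e=0$ and $ajb=h\in N(R)$. The paper gets $e\in J(R)$ by expanding $(ajb-e)^m=0$; you get it more cleanly by reducing modulo $J(R)$.
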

			\begin{proof}
				Let $a,b\in R$ be such that $ab=0$. Since $R$ is ZINC, $aJ(R)b\subseteq Z_i(R) \subseteq E(R)+N(R)$. So, for any $j\in J(R)$,  there exist $e\in E(R)$, $h\in N(R)$ such that $ajb=e+h$.  Then $(ajb-e)^{m}=0$ for some positive integer $m$. This implies that $j_1-e=0$ for some $j_1\in J(R)$.  So, $e\in J(R)$, that is, $e=0$.
			\end{proof}
			
			Now, we illustrate an example of a ZINC ring whose polynomial extension and power series extension fail to be  ZINC.
			\begin{example}
				
				\begin{enumerate}
					
					\item Take $R=M_2(\mathbb{Z}_2)$, which is ZINC by Lemma \ref{L2m} and  Theorem \ref{dsnc}.  Observe that $f(x)=E_{12}$, $g(x)=E_{11}x\in R[x]$ and $f(x)g(x)=0$. So, $f(x)E_{21}g(x)=g(x)\in Z_i(R[x])$. Suppose $g(x)=e(x)+n(x)$, where $e(x)=e_0+e_1x+\dots +e_kx^k\in E(R[x]), n(x)=n_0+n_1x+\dots +n_{k_1}x^{k_1}\in N(R[x])$. Clearly, $e_0=-n_0\in E(R)\cap N(R)$, that is, $e_0=0$. If $e(x)\neq 0$, then $e(x)=x^mh(x)$, where $m\geq 1$ and $h(x)=h_0+h_1x+h_2x^2+\dots +h_{k_2}x^{k_2} \in R[x]$ and $h_0\neq 0$. Comparing the coefficient of $x^m$ in $e(x)^2=e(x)$ gives $h_0=0$, a contradiction. Thus $g(x)\in N(R[X])$, a contradiction. Therefore, $R[x]$ is not ZINC.
					
					\item  Take $R=M_2(\mathbb{Z}_2)$ which is ZINC (by Theorem \ref{dsnc}).  It is well known that $J(R[[x]])=J(R)+\langle x \rangle$. Observe that $E_{11}E_{21}=0$ and $E_{11}(E_{12}x)E_{21}=E_{11}x\notin N(R[[x]])$. By Proposition \ref{sj}, $R[[x]]$ is not ZINC.  
				\end{enumerate}
				
			\end{example}

			\begin{theorem}\label{nil}
				Let $R$ be a ring and $I$ be a nil ideal of $R$. If $R/I$ is  ZINC, then $R$ is  ZINC.
			\end{theorem}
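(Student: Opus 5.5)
The plan is to push a zero-insertive element down to $R/I$, use the ZINC property there, and then lift the resulting nil-clean decomposition back to $R$ using the fact that idempotents lift modulo nil ideals. Let $\bar{\ }:R\to R/I$ denote the canonical projection.

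First, take $x\in Z_i(R)$, so $x=arb$ with $a,b,r\in R$ and $ab=0$. Then $\bar a\bar b=\overline{ab}=0$ and $\bar x=\bar a\bar r\bar b$, so $\bar x\in Z_i(R/I)$. Since $R/I$ is ZINC, we can write $\bar x=\bar f+\bar h$ with $\bar f\in E(R/I)$ and $\bar h\in N(R/I)$.

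Second, lift the idempotent. Since $I$ is nil, idempotents lift modulo $I$: there is $e\in E(R)$ with $\bar e=\bar f$. This is standard: given any preimage $g$ of $\bar f$, we have $g-g^2\in I$, so $g-g^2$ is nilpotent, and a suitable polynomial in $g$ with integer coefficients is an idempotent congruent to $g$ modulo $I$. Now set $h=x-e$. Then $\bar h$ is nilpotent in $R/I$, so $h^m\in I$ for some $m\geq 1$. Since $I$ is nil, $h^m$ is nilpotent, and hence $h$ itself is nilpotent. Therefore $x=e+h$ with $e\in E(R)$ and $h\in N(R)$, so $x$ is nil-clean. As $x\in Z_i(R)$ was arbitrary, $R$ is ZINC.

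The only step that is not routine is the lifting of idempotents modulo a nil ideal, and I will cite it as a standard fact rather than reprove it. One point needs care: the lift of the idempotent must be chosen independently of the nilpotent part, and this is why I define $h=x-e$ only after $e$ has been fixed. With that order, the nilpotency of $h$ follows directly from the fact that $I$ is nil.
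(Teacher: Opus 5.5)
Your proposal is correct and follows essentially the same route as the paper's proof: you push $x\in Z_i(R)$ down to $R/I$, lift the idempotent of its nil-clean decomposition via the standard lifting of idempotents modulo a nil ideal, and conclude that $x-e$ is nilpotent because $(x-e)^m\in I$ and $I$ is nil.
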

			
			\begin{proof}
				Suppose that $R/I$ is ZINC. Let $x\in Z_i(R)$. Clearly, $\bar{x}\in Z_i(R/I)$.  Since $R/I$ is  ZINC,  $\bar{x}=\bar{e}+\bar{h}$ for some $\bar{e}\in E(R/I), \bar{h}\in N(R/I)$. Since idempotents lifts modulo any nil ideal, $\bar{e}=\bar{f}$ for some $f\in E(R)$. Then $x-f$ is nilpotent modulo $I$. Thus, $(x-f)^m\in I\subseteq N(R)$ for some positive integer $m$. So, $x-f\in N(R)$. Therefore, $R$ is  ZINC.
			\end{proof}
			
			\begin{proposition}\label{prod}
				Let $\{R_{\alpha} \}$ be a finite collection of rings. Then the direct product $R=\prod R_{\alpha}$  is  ZINC if and only if each $R_{\alpha}$ is  ZINC.
			\end{proposition}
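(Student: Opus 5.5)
We argue componentwise, using the fact that in a finite direct product $R=\prod_{\alpha=1}^{m} R_{\alpha}$ all the relevant notions are coordinatewise: $E(R)=\prod E(R_\alpha)$, and, because the product is finite, $N(R)=\prod N(R_\alpha)$. The second equality is where finiteness matters. If $h_\alpha^{k_\alpha}=0$ for each $\alpha$, then $(h_\alpha)^{k}=0$ with $k=\max_\alpha k_\alpha$, which exists only because there are finitely many indices. Consequently an element $(x_\alpha)\in R$ is nil-clean exactly when each $x_\alpha$ is nil-clean in $R_\alpha$.

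The key step is to identify the zero-insertive elements of the product:
\[
Z_i(R)=\prod Z_i(R_\alpha).
\]
For ``$\subseteq$'', let $x=arb$ with $ab=0$ in $R$. Projecting onto the $\alpha$-th coordinate gives $x_\alpha=a_\alpha r_\alpha b_\alpha$ with $a_\alpha b_\alpha=0$, so $x_\alpha\in Z_i(R_\alpha)$. For ``$\supseteq$'', suppose each $x_\alpha=a_\alpha r_\alpha b_\alpha$ with $a_\alpha b_\alpha=0$. Put $a=(a_\alpha)$, $r=(r_\alpha)$, $b=(b_\alpha)$. Then $ab=0$ and $x=arb$, so $x\in Z_i(R)$.

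$(\Leftarrow)$ Assume each $R_\alpha$ is ZINC and let $x\in Z_i(R)$. By the key step, $x_\alpha\in Z_i(R_\alpha)$, so $x_\alpha=e_\alpha+h_\alpha$ with $e_\alpha\in E(R_\alpha)$ and $h_\alpha\in N(R_\alpha)$. Then $x=(e_\alpha)+(h_\alpha)$. Here $(e_\alpha)$ is an idempotent of $R$, and $(h_\alpha)$ is nilpotent by the finiteness remark above. Hence $x$ is nil-clean and $R$ is ZINC.

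$(\Rightarrow)$ Assume $R$ is ZINC, fix an index $\beta$, and let $y\in Z_i(R_\beta)$. Embed $y$ into $R$ as the element $x$ with $x_\beta=y$ and $x_\alpha=0$ for $\alpha\neq\beta$. Since $0=0\cdot0\cdot0$ is zero-insertive in every ring, each coordinate of $x$ is zero-insertive, so $x\in Z_i(R)$ by the key step. By hypothesis $x=e+h$ with $e\in E(R)$ and $h\in N(R)$. Projecting onto the $\beta$-th coordinate gives $y=e_\beta+h_\beta$, where $e_\beta$ is idempotent and $h_\beta$ is nilpotent. Thus $R_\beta$ is ZINC.

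Nothing in this argument is genuinely difficult. The only step that needs care is the nilpotency of $(h_\alpha)$ in the $(\Leftarrow)$ direction, which uses the finiteness of the collection to take a common exponent.
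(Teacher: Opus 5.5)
Your proof is correct and is exactly the componentwise argument the paper omits as ``trivial''. You also rightly identify the common nilpotency exponent as the only place finiteness is used, which matches the paper's later example of an infinite product of ZINC rings that is not ZINC.
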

			
			\begin{proof}
				The proof is trivial.	
			\end{proof}
			
			However, it is possible to  construct an infinite direct product of ZINC rings that is not  ZINC, as illustrated in the following Example \ref{infinite pdt}. The following remark will be referred in Example \ref{infinite pdt} and other subsequent results.
			
			\begin{remark}\label{ZINC rem}
				For any $w\in R$, observe that $wE_{11}\in Z_i(M_n(R))$ as $wE_{11}=E_{1n}(wE_{n1})E_{11}$ for all $n\geq 2$.
				
			\end{remark} 
			
			\begin{example}\label{infinite pdt}
				It is easy to observe that $\mathbb{Z}_{2^m}$ is nil clean for all positive integer $m$. By \cite[Corollary 7]{ncmr}, $M_n(\mathbb{Z}_{2^{m}})$ is nil clean for any positive integers $n$ and $m$. Now, take $R=M_2(\mathbb{Z}_{2})\times M_3(\mathbb{Z}_{2^{2}})\times M_4(\mathbb{Z}_{2^{3}})\times \cdots=\prod_{i=1}^{\infty} M_{i+1}(\mathbb{Z}_{2^{i}})$. Let $P=(2E_{11}, 2E_{11},\dots)$. By Remark \ref{ZINC rem}, $P\in Z_i(R)$. Note that $J(\prod_{i=1}^{\infty} M_{i+_1}(\mathbb{Z}_{2^{i}}))=\prod_{i=1}^{\infty} M_{i+_1}(J(\mathbb{Z}_{2^{i}}))$. Let $K_i=2E_{11}\in M_{i+_1}(\mathbb{Z}_{2^{i}})$. Then $K_i\in J( M_{i+_1}(\mathbb{Z}_{2^{i}}))$ as $2\in J(\mathbb{Z}_{2^i})$. Suppose $K_i=F+B$, where $F$ is an idempotent and $B$ is a nilpotent element. Let $V=I+B\in U(M_{i+1}(\mathbb{Z}_{2^i}))$. So, $V=(I-F)+K_i$. This implies that $V=V(I-F)V^{-1}+VKV^{-1}$. So, we obtain $V(I-K_iV^{-1})=V(I-F)V^{-1}\in E(M_{i+1}(\mathbb{Z}_{2^i}))\cap U(M_{i+1}(\mathbb{Z}_{2^i}))$. Therefore, $F=0$. Note that $K_i$ is nilpotent of index $i$. Hence, $P\notin E(R)+N(R)$  as $P=(K_1, K_2,\dots)$. Thus, $R$ is not a  ZINC ring.   
			\end{example}

			\begin{definition}
				Let $M$ be an $(R,R)$-bimodule. The \textit{trivial extension} of $R$ by $M$ is the ring $R\propto M=R\oplus M$, where the addition is usual, and the multiplication is defined as 
				$(r_1,s_1)(r_2,s_2)=(r_1r_2,r_1s_2+s_1r_2),r_i\in R,s_i\in M$ and $i=1,2$.     
				
			\end{definition} 
			
			The following corollary is a  consequence of Theorem \ref{nil} and Proposition \ref{prod}.

			\begin{corollary} 
				\begin{enumerate}
					\item  $T_n(R)$ is  ZINC if and only if $R$ is  ZINC.
					\item Let $M$ be an $(R,R)$-bimodule. The trivial extension $R\propto M$ is ZINC if and only if $R$ is ZINC.
					\item $R[x]/\langle x^n \rangle$ is ZINC if and only if $R$ is ZINC.

				\end{enumerate}
			\end{corollary}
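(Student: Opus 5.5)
We will handle all three parts at once. For each ring $S\in\{T_n(R),\,R\propto M,\,R[x]/\langle x^n\rangle\}$ we find a nilpotent ideal $I$ of $S$ with $S/I$ a finite direct product of copies of $R$. For the converse direction we add a ring retraction $S\to R$.

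\emph{The ideals.} For $T_n(R)$ take $I$ to be the strictly upper triangular matrices. This is an ideal with $I^n=0$, and the diagonal map gives $T_n(R)/I\cong R^n=\prod_{i=1}^n R$. For $R\propto M$ take $I=0\oplus M$. Since $(0,m)(0,m')=(0,0)$, we have $I^2=0$, and $(R\propto M)/I\cong R$. For $R[x]/\langle x^n\rangle$ take $I=\langle x\rangle/\langle x^n\rangle$. Then $I^n=0$ and the quotient is $R$. In every case $I$ is nilpotent, so in particular nil.

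\emph{($\Leftarrow$).} Suppose $R$ is ZINC. By Proposition \ref{prod}, $R^n$ is ZINC, because it is a finite product of ZINC rings. Hence $S/I$ is ZINC in each case. Since $I$ is a nil ideal, Theorem \ref{nil} shows that $S$ is ZINC.

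\emph{($\Rightarrow$).} This is the step not covered by Theorem \ref{nil} and Proposition \ref{prod}, so we argue directly. In each case we use two unital ring homomorphisms $\iota:R\to S$ and $\pi:S\to R$ with $\pi\circ\iota=\mathrm{id}_R$.

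\begin{itemize}
\item For $T_n(R)$, let $\iota(r)=rI_n$ and let $\pi$ take the $(1,1)$ entry. Because the matrices are upper triangular, $(AB)_{11}=A_{11}B_{11}$, so $\pi$ is multiplicative.
\item For $R\propto M$, let $\iota(r)=(r,0)$ and $\pi(r,m)=r$.
\item For $R[x]/\langle x^n\rangle$, let $\iota$ be the inclusion of constants and let $\pi$ evaluate at $x=0$.
\end{itemize}

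Now let $S$ be ZINC and take $x\in Z_i(R)$, say $x=arb$ with $ab=0$. Then $\iota(x)=\iota(a)\iota(r)\iota(b)$ and $\iota(a)\iota(b)=\iota(ab)=0$, so $\iota(x)\in Z_i(S)$. Since $S$ is ZINC, $\iota(x)=e+h$ with $e\in E(S)$ and $h\in N(S)$. Applying $\pi$ gives $x=\pi(e)+\pi(h)$. Ring homomorphisms send idempotents to idempotents and nilpotents to nilpotents, so $\pi(e)\in E(R)$ and $\pi(h)\in N(R)$. Thus $x$ is nil-clean, and $R$ is ZINC.

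The only points needing care are these: that each $I$ really is a two-sided nil ideal; that each $\pi$ is multiplicative, which for $T_n(R)$ rests on triangularity; and that $\iota$ preserves the relation $ab=0$. We expect the converse direction to be the main obstacle, since it is the part the cited results do not give. Once the retraction is written down, it reduces to the observation that zero-insertivity transports along $\iota$ and nil-cleanness transports along $\pi$.
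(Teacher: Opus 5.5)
Your proof is correct and follows the paper's route. The ($\Leftarrow$) direction is exactly the paper's nil-ideal argument via Theorem \ref{nil} and Proposition \ref{prod}; for ($\Rightarrow$) the paper likewise pushes $x\in Z_i(R)$ into $T_n(R)$ (as $\mathrm{diag}(x,0,\dots,0)$ rather than your $xI_n$) and reads off the $(1,1)$ entry, while it dismisses (2) and (3) with ``analogously''. Your unital retraction $\iota,\pi$ is what actually makes that ``analogously'' work, since the corner embedding is not available in the constant-diagonal matrix models of $R\propto M$ and $R[x]/\langle x^n\rangle$.
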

			\begin{proof}
				For  $(1)$: Suppose $R$ is  ZINC. Observe that $I=\begin{pmatrix}
					0 & R & \dots & R\\
					0 & 0 & \dots & R \\
					\vdots & \vdots & \ddots & \vdots \\
					0 & 0 & \dots & 0
				\end{pmatrix}$ is a nil ideal of $ T_n(R)$.  Note that $T_n(R)/I\cong R\times \dots \times R$.
				By Theorem \ref{nil} and Proposition \ref{prod}, $T_n(R)$ is ZINC. Conversely, suppose $T_n(R)$ is ZINC.  If $x\in Z_i(R)$, then $A=\begin{pmatrix}
					x & \mathbf{0}\\
					\mathbf{0} & \mathbf{0}
				\end{pmatrix}\in Z_i(T_n(R))$. Since $T_n(R)$ is ZINC, $x\in E(R)+N(R)$.
				The proofs of $(2)$ and $(3)$ follows analogously as $R\propto M\cong \left\{\left(\begin{array}{rr}
					t & s\\
					0 & t
				\end{array}
				\right) : t\in R, s\in M \right\}$ and  $R[x]/<x^n>\cong  
				\left \lbrace
				\left(\begin{array}{lccccr}
					r_1 & r_{2} & r_3 & \dots & r_{n-1} & r_n\\
					0 & r_1 & r_2 & \dots &  r_{n-2} & r_{n-1}\\
					\vdots & \vdots  & \vdots & \vdots & \vdots \\
					0 & 0 & 0 & \dots & r_1 & r_2 \\
					0 & 0 & 0 & \dots & 0 & r_1
				\end{array}
				\right ):r_i\in R\right \rbrace$ .
			\end{proof}

			\begin{corollary}
				Let $K$ be a field. Then, the path algebra $KA_n$ is ZINC (\cite[Example 1.13]{rept}). 
			\end{corollary}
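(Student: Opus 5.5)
The plan is to identify $KA_n$ with a ring of upper triangular matrices and then apply part (1) of the preceding corollary. Since $A_n$ is finite and acyclic, with a single path from $v_i$ to $v_j$ for each $i\le j$ (namely $e_ie_{i+1}\cdots e_{j-1}$, or $v_i$ when $i=j$), the path algebra $KA_n$ is finite dimensional. It has $K$-basis $\{p_{ij}: 1\le i\le j\le n\}$, where $p_{ij}$ denotes this unique path. I would define $\Psi:KA_n\to T_n(K)$ on the basis by $\Psi(p_{ij})=E_{ij}$, that is, $\Psi(v_i)=E_{ii}$ and $\Psi(e_i)=E_{i,i+1}$. Then $\Psi$ is the path algebra analogue of the map $\Phi$ in Lemma \ref{L2m}, restricted to real edges; this is the standard isomorphism recorded in \cite[Example 1.13]{rept}.

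To check that $\Psi$ is a $K$-algebra isomorphism, I would compare multiplication on basis elements. On one side, $p_{ij}p_{kl}=\delta_{jk}p_{il}$, because concatenation is defined only when $r(p_{ij})=v_j=v_k=s(p_{kl})$. On the other side, $E_{ij}E_{kl}=\delta_{jk}E_{il}$. Since $\Psi$ is a bijection between the two bases, it is a linear isomorphism, and it sends $\sum_i v_i$, the identity of $KA_n$, to $I_n$. Hence $KA_n\cong T_n(K)$.

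Next, a field $K$ is a domain, hence semicommutative. So $Z_i(K)=0$, and every zero-insertive element of $K$ is trivially nil-clean ($0=0+0$), which means $K$ is ZINC. (Directly: if $x=arb$ with $ab=0$, then $a=0$ or $b=0$, so $x=0$.) By part (1) of the preceding corollary, $T_n(K)$ is ZINC. Transporting this along $\Psi$ gives the result, since ring isomorphisms preserve zero-insertive, idempotent and nilpotent elements.

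The only step needing care is the identification of $KA_n$ with $T_n(K)$, in particular the convention that paths compose left to right, so that arrows go to the upper triangle and not the lower one. The ZINC property is insensitive to this choice anyway, because $T_n(K)$ and the lower triangular matrix ring are anti-isomorphic (via transpose), and ZINC is preserved under anti-isomorphism: $x=arb$ with $ab=0$ maps to $x^{op}=b^{op}r^{op}a^{op}$ with $b^{op}a^{op}=0$.
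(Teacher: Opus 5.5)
Your proposal is correct and takes essentially the paper's route: the cited example supplies $KA_n\cong T_n(K)$, a field is ZINC because $Z_i(K)=0$, and part (1) of the preceding corollary then gives the result. The paper leaves the isomorphism to the citation, and your explicit basis check and your remark on the upper- versus lower-triangular convention simply fill in that step.
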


			\begin{lemma}\label{nztc}
				If $R$ is ZINC, then $Z_i(R)\subseteq E(R)+U(R)$.
			\end{lemma}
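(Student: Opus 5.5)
The idea is to turn a nil-clean decomposition of a zero-insertive element into an ``idempotent plus unit'' decomposition. The standard fact we need is that if $h\in N(R)$ then $1+h\in U(R)$, and more generally $u+h\in U(R)$ whenever $u\in U(R)$ and $h\in N(R)$ commute.

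Let $x\in Z_i(R)$. Since $R$ is ZINC, we can write $x=e+h$ with $e\in E(R)$ and $h\in N(R)$. The plan is to shift the idempotent to its complement. Put $f=1-e$, which lies in $E(R)$. Then
\[
x=e+h=(1-e)+(2e-1+h)=f+(2e-1+h).
\]
It therefore suffices to show that $2e-1+h$ is a unit. First, $2e-1$ is a unit, since $(2e-1)^2=4e-4e+1=1$. Writing
\[
2e-1+h=(2e-1)\bigl(1+(2e-1)h\bigr),
\]
we are reduced to showing that $(2e-1)h$ is nilpotent, so that $1+(2e-1)h$ is a unit.

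This is the main obstacle, because $e$ and $h$ need not commute and a nil-clean decomposition is not assumed to be strong.

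The first thing I would try is a simpler decomposition that avoids this issue: $x=-e'+u$ for a suitable idempotent $e'$ and unit $u$. Taking $x=e+h=(1-e)\cdot\ldots$ does not obviously help. The cleaner route is to note that $x-1=(e-1)+h=-(1-e)+h$, but again noncommutativity blocks us.

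So I would use a different trick. Take the idempotent $1-e$ and write
\[
x=(1-e)+\bigl(x-(1-e)\bigr)=(1-e)+(2e-1)+h.
\]
If the needed nilpotency of $(2e-1)h$ fails in general, the fallback is that the statement only asks for $E(R)+U(R)$ with no commutativity requirement. Then I would exploit that $x$ is zero-insertive, $x=arb$ with $ab=0$, which also puts $-x$ in $Z_i(R)$. Applying ZINC to $-x$ gives $-x=g+k$ with $g\in E(R)$ and $k\in N(R)$. Hence
\[
x=-g-k=(1-g)+\bigl(-1-k\bigr),
\]
and $-1-k=-(1+k)$ is a unit because $k$ is nilpotent.

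This last argument needs no commutativity at all, so it is the one I would actually write. The key observation is that $Z_i(R)$ is closed under negation, since $-x=a(-r)b$. With that in hand the lemma reduces to the elementary fact that $1+N(R)\subseteq U(R)$.
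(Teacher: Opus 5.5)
Your final argument is exactly the paper's proof: $-x=a(-r)b\in Z_i(R)$, then $-x=g+k$ with $g\in E(R)$ and $k\in N(R)$, so $x=(1-g)+(-1-k)$ with $-1-k=-(1+k)\in U(R)$. Drop the earlier detour through $2e-1+h$ from the write-up. Without commutativity, $(2e-1)h$ need not be nilpotent, and that route is never completed.
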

			\begin{proof}
				Let $x\in Z_i(R)$. Clearly, $-x\in Z_i(R)$. As $R$ is ZINC, $-x=e+h$ for some $e\in E(R)$  and $h\in N(R)$. Therefore, $x=(1-e)+(-h-1)\in E(R)+U(R)$.	
			\end{proof}
			
			We skip the proof of the following trivial Lemma. 
			\begin{lemma}\label{invertible}
				Let $R$ be a ring. Then,   $U=\begin{pmatrix}
					1 & u_{12} & \dots & u_{1n}\\
					0 & 1 & \dots & u_{2n} \\
					\vdots & \vdots & \ddots & \vdots \\
					0 & 0 & \dots & 1
				\end{pmatrix}, V=\begin{pmatrix}
					1 & 0 & \dots & 0\\
					v_{21} & 1 & \dots & 0 \\
					\vdots & \vdots & \ddots & \vdots \\
					v_{n1} & v_{n2} & \dots & 1
				\end{pmatrix}\in M_n(R)$ are invertible.
			\end{lemma}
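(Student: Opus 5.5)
The plan is to exhibit explicit two-sided inverses, using the observation that $U-I$ is strictly upper triangular and $V-I$ is strictly lower triangular. Write $U=I+N$, where $N=\sum_{i<j}u_{ij}E_{ij}$. A product $E_{i_1j_1}E_{i_2j_2}\cdots E_{i_kj_k}$ with every $i_\ell<j_\ell$ is nonzero only if $j_\ell=i_{\ell+1}$ for each $\ell$. In that case the indices strictly increase along the chain, so any product of $n$ such matrix units vanishes. Because the coefficients $u_{ij}\in R$ sit inside the entries and matrix multiplication over a (possibly noncommutative) ring respects this index bookkeeping, we still get $N^n=0$ in $M_n(R)$. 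The only point needing care is that expanding $N^n$ gives sums of terms $u_{i_1j_1}\cdots u_{i_nj_n}E_{i_1j_1}\cdots E_{i_nj_n}$, with the scalars kept in order. Each such term is zero by the chain argument, so noncommutativity of $R$ causes no trouble.

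Next, $I$ commutes with $N$, so the telescoping identity $(I+N)\bigl(I-N+N^2-\cdots+(-1)^{n-1}N^{n-1}\bigr)=I+(-1)^{n-1}N^n=I$ holds. The same identity holds with the factors in the other order. Hence $U^{-1}=\sum_{k=0}^{n-1}(-1)^kN^k$, and this inverse is again upper unitriangular.

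For $V$, write $V=I+N'$ with $N'=\sum_{i>j}v_{ij}E_{ij}$. The argument is symmetric: chains of matrix units now strictly decrease their indices, so $N'^n=0$ and $V^{-1}=\sum_{k=0}^{n-1}(-1)^kN'^k$. Alternatively, one can deduce the claim for $V$ from the claim for $U$ by conjugating with the permutation matrix $P=\sum_i E_{i,n+1-i}$. This $P$ is invertible, is its own inverse, and reverses the index order, so $PVP$ is upper unitriangular.

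I expect no real obstacle. The only step that needs a sentence of justification is the nilpotency $N^n=0$ over a noncommutative $R$, and the matrix-unit expansion above handles it.
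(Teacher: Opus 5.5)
Your proof is correct and complete. The paper gives no proof of this lemma; it calls it trivial and skips it, so there is no argument of the authors' to compare yours against.

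Your decomposition $U=I+N$ with $N$ strictly upper triangular is a standard way to fill the gap. The matrix-unit chain argument gives $N^n=0$ even when $R$ is noncommutative, because $(aE_{ij})(bE_{kl})=ab\,\delta_{jk}E_{il}$ keeps the scalars in order. The telescoping identity then shows that $\sum_{k=0}^{n-1}(-1)^kN^k$ is a two-sided inverse. For $V$, either the symmetric argument or conjugation by the reversal permutation matrix works.

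Your route also produces an explicit inverse that is visibly unitriangular again. The other common option is induction on $n$ using the block identity $\begin{pmatrix}1&u\\0&U'\end{pmatrix}^{-1}=\begin{pmatrix}1&-uU'^{-1}\\0&U'^{-1}\end{pmatrix}$, which is equally elementary but less explicit.
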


			\begin{lemma}\label{ZINCmatrix}
				Suppose $X=\begin{pmatrix}
					1 & u_{12} & \dots & u_{1n}\\
					0 & 1 & \dots & u_{2n} \\
					\vdots & \vdots & \ddots & \vdots \\
					0 & 0 & \dots & 1
				\end{pmatrix}$, $Y=\begin{pmatrix}
					1 & 0 & \dots & 0\\
					-x_2 & 1 & \dots & 0 \\
					\vdots & \vdots & \ddots & \vdots \\
					-x_n & 0 & \dots & 1
				\end{pmatrix}$ and $U=(u_{ij})\in M_n(R)$. If $XYU=(b_{ij})$, then $b_{1j}=(1-\sum\limits_{i=2}^nu_{1i}x_i)u_{1j}+ \sum\limits_{k=2}^nu_{1k}u_{kj}$, when $j\geq 1$ and $	b_{ij}=(-x_i-\sum\limits_{k=i+1}^nu_{ik}x_k)u_{1j}+u_{ij}+\sum\limits_{k=i+1}^nu_{ik}u_{kj}$, when $i> 1$, $j\geq 1$.
			\end{lemma}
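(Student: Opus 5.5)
The plan is a direct entry-by-entry computation. I will first compute the product $XY$ and then multiply the result on the right by $U$. This order is convenient because $Y$ differs from the identity matrix only in its first column.

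\emph{Step 1: the product $XY$.} Write $X=(X_{il})$, so that $X_{ii}=1$, $X_{il}=u_{il}$ for $l>i$, and $X_{il}=0$ for $l<i$. Write $Y=(Y_{lk})$. For $k\geq 2$ we have $Y_{lk}=\delta_{lk}$, so $(XY)_{ik}=X_{ik}$; that is, columns $2,\dots,n$ of $XY$ coincide with those of $X$. For the first column, $Y_{11}=1$ and $Y_{l1}=-x_l$ for $l\geq 2$, which gives
\[
(XY)_{i1}=X_{i1}-\sum_{l=2}^n X_{il}x_l .
\]
When $i=1$ this equals $1-\sum_{l=2}^n u_{1l}x_l$. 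When $i>1$, the term $X_{i1}$ vanishes and only indices $l\geq i$ survive in the sum, so
\[
(XY)_{i1}=-x_i-\sum_{l=i+1}^n u_{il}x_l .
\]

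\emph{Step 2: multiplying by $U$.} We have
\[
b_{ij}=(XY)_{i1}u_{1j}+\sum_{k=2}^n (XY)_{ik}u_{kj}=(XY)_{i1}u_{1j}+\sum_{k=2}^n X_{ik}u_{kj}.
\]
For $i=1$, the identities $X_{1k}=u_{1k}$ for $k\geq 2$ give
\[
b_{1j}=\Bigl(1-\sum_{i=2}^n u_{1i}x_i\Bigr)u_{1j}+\sum_{k=2}^n u_{1k}u_{kj}.
\]
For $i>1$, the sum over $k\geq 2$ splits into three parts. The terms with $k<i$ are zero, the term $k=i$ contributes $u_{ij}$, and the terms with $k>i$ contribute $\sum_{k=i+1}^n u_{ik}u_{kj}$. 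Combining these with Step 1 yields
\[
b_{ij}=\Bigl(-x_i-\sum_{k=i+1}^n u_{ik}x_k\Bigr)u_{1j}+u_{ij}+\sum_{k=i+1}^n u_{ik}u_{kj},
\]
as claimed.

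The computation is routine, and the main difficulty is bookkeeping. The one point needing care is the case $i>1$ of Step 2, where the vanishing of $X_{ik}$ for $k<i$ and the diagonal entry $X_{ii}=1$ must be tracked correctly. Because $R$ need not be commutative, I will also keep every product in the order in which the factors appear.
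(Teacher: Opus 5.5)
Your computation is correct. The paper states this lemma without a written proof, treating it as a routine matrix multiplication, and your entry-by-entry evaluation of $(XY)U$ is exactly that verification: you use that $Y$ differs from the identity only in its first column, and you keep all products in the correct order over the noncommutative ring $R$.
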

			
			A ring $R$ is said to be weakly clean (\cite{wclean}) if for any $x\in R$ there exist $e\in E(R), u\in U(R)$ such that $x-e-u\in (1-e)Rx$.
			
			\begin{theorem}\label{weaklyc}
				For any $n\geq 2$, if $M_n(R)$ is a ZINC ring, then $R$ is weakly clean ring.
			\end{theorem}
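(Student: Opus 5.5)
The plan is to embed an arbitrary $x\in R$ as a zero-insertive matrix and read off the weakly clean decomposition from the $(1,1)$ corner. Fix $n\geq 2$ and $x\in R$. By Remark \ref{ZINC rem}, $A=xE_{11}\in Z_i(M_n(R))$, and so is $-A$. Since $M_n(R)$ is ZINC, Lemma \ref{nztc} gives
\[
A=F+W,\qquad F\in E(M_n(R)),\quad W\in U(M_n(R)).
\]
Set $U=W^{-1}=(u_{ij})$. The aim is to produce $e\in E(R)$ and $u\in U(R)$ with $x-e-u\in(1-e)Rx$.

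\textbf{Step 1: constraints on $F$ and $U$.} Because rows $2,\dots,n$ of $A$ are zero, rows $2,\dots,n$ of $F$ are the negatives of the corresponding rows of $W$. I will combine this with the identities $F^2=F$, $UW=WU=I$ and $W=A-F$. In particular
\[
U(A-F)=I \quad\Longrightarrow\quad UA=I+UF,
\]
and $UA$ is supported only in its first column, with entries $u_{i1}x$. So every column of $I+UF$ other than the first vanishes. This expresses most entries of $UF$ in terms of $U$ and puts a strong constraint on the first column.

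\textbf{Step 2: simplify $U$ by invertible row operations.} Multiply $U$ on the left by the unitriangular matrices $X$ and $Y$ of Lemma \ref{ZINCmatrix}, with the free entries $x_i$ and $u_{ij}$ still to be chosen. These are invertible by Lemma \ref{invertible}, so $XYU$ is again invertible. I will choose the $x_i$ so as to clear the entries $b_{i1}$ for $i>1$, using the explicit formulas for $b_{ij}$ in Lemma \ref{ZINCmatrix}; the natural choice is to match $x_i$ to $u_{i1}$ up to the correction terms appearing there. The resulting invertible matrix has first column $(b_{11},0,\dots,0)^T$. From this, the aim is to deduce that $b_{11}$ (or a closely related corner expression) is a unit of $R$: block triangularity plus invertibility should force invertibility of the corner. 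A small idempotent-conjugation argument may also be needed here.

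\textbf{Step 3: define $e$ and $u$.} Take $e$ to be the idempotent coming from the $(1,1)$ corner of $I-F$ after this conjugation. Take $u$ to be the corner unit from Step 2, corrected by terms of the form $(1-e)\,r\,x$. The correction is where the $(1-e)Rx$ slack in the definition of weakly clean is used. Terms such as $u_{1i}x_i u_{1j}$ in the formula for $b_{1j}$ carry a factor of $x$ coming from $A=xE_{11}$, and these are exactly what should land in $(1-e)Rx$. The check then amounts to verifying
\[
x-e-u\in(1-e)Rx
\]
from the $(1,1)$ entries of $UW=I$.

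\textbf{Main obstacle.} I expect the hardest step to be the bookkeeping in Steps 2–3: choosing the parameters in $X$ and $Y$ so that the corner entry is genuinely a unit in $R$, not merely in $M_n(R)$, while the leftover terms factor with $(1-e)$ on the left and $x$ on the right. If the choice via Lemma \ref{ZINCmatrix} does not close directly, I will fall back on two routes. The first is to pass to the idempotent $I-F$ and use that $A$ and $I-F$ differ by a unit, arguing in the corner ring $E_{11}M_n(R)E_{11}\cong R$. The second is to use the ZINC decomposition $-A=F+H$ with $H$ nilpotent directly, and extract the corner from it.
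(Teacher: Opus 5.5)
\textbf{There is a genuine gap.} Your opening matches the paper: $A=xE_{11}=F+W$ via Remark~\ref{ZINC rem} and Lemma~\ref{nztc}, then row-reducing $U=W^{-1}$ with the matrices of Lemma~\ref{ZINCmatrix}. But the step that carries all the content is not supplied, and the mechanism you sketch for it does not work.

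\textbf{The missing idea: conjugate the idempotent.} Put $G=UFW\in E(M_n(R))$. Then $UF=GU$, so your identity becomes $UA=I+GU$. Multiplying on the left by $I-G$ gives $(I-G)UA=I-G$, so $I-G$ is supported in its first column. Its first column gives $g=1-f\in E(R)$ and elements $x_i$ with $x_ig=x_i$. Comparing entries in $UA=I+GU$ then yields
$fu_{11}+1=u_{11}x$, $fu_{1j}=0$, $u_{jj}-x_ju_{1j}=-1$, $u_{i1}-x_iu_{11}=u_{i1}x$, and $u_{ij}=x_iu_{1j}$ for $i\neq j$, $i,j>1$.
The entries of $Y$ are these forced $x_i$, and the entries of $X$ are the strictly upper entries of $U$ itself. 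They are not free parameters.

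\textbf{Your Step~2 aims at the wrong shape.} Clearing the entries below $b_{11}$ requires $u_{i1}=x_iu_{11}$, i.e.\ $u_{i1}\in Ru_{11}$, and nothing provides this. If it did hold, the corner would simply be $u_{11}$, and its invertibility would make $x$ outright clean via $fu_{11}+1=u_{11}x$. So the whole difficulty is hidden in that step.

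Moreover, over a noncommutative ring an invertible $\begin{pmatrix} b & c\\ 0 & T\end{pmatrix}$ only makes $b$ left invertible. For instance, if $ab=1\neq ba$, then $\begin{pmatrix} b & 1-ba\\ 0 & a\end{pmatrix}$ has inverse $\begin{pmatrix} a & 0\\ 1-ba & b\end{pmatrix}$, yet $b\notin U(R)$.

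In the paper, the relations above force the first \emph{row} of $B=XYU$ to be $(b_{11},0,\dots,0)$. They also force its lower-right block to be upper triangular with $-1$ on the diagonal, hence invertible. A column operation then gives $\mathrm{diag}(b_{11},T)$ with $T$ invertible. Only this yields $b_{11}=u_{11}+\sum_{i\geq 2}u_{1i}u_{i1}x\in U(R)$.

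\textbf{Step~3 misplaces the source of the factors.} The right-hand factor $x$ does not come from the terms $u_{1i}x_iu_{1j}$ of $b_{1j}$; these cancel, giving $b_{1j}=0$. It enters through $u_{i1}-x_iu_{11}=u_{i1}x$, and the factor $1-e$ enters through $fu_{1i}=0$. Substituting $u_{11}=b_{11}-(\sum_{i\geq 2}u_{1i}u_{i1})x$ into $fu_{11}+1=u_{11}x$ gives $x-e-v=(1-e)rx^2$, with $v=b_{11}^{-1}$, $e=b_{11}^{-1}fb_{11}$ and $r=v\sum_{i\geq 2}u_{1i}u_{i1}$. Note that $e$ is conjugate to the corner $f$ of $G$, not to the corner of $I-G$.

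As written, the proposal does not prove the theorem.
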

			\begin{proof}
				Let $w\in R$ and $ W=wE_{11}\in M_n(R)$. By Remark \ref{ZINC rem}, $W\in Z_i(M_n(R))$. Since  $M_n(R)$ is ZINC,   $W=F+V$ for some $F\in E(M_n(R)$ and $V\in U(M_n(R))$. This implies that $V^{-1}W=V^{-1}F+I$, where $I$ is the identity matrix. So, we obtain \vspace{0.2cm} $V^{-1}W=(V^{-1}FV)V^{-1}=GV^{-1}+I$, where  $G=V^{-1}FV$ which is also idempotent. Hence, $(I-G)V^{-1}W=I-G$. As $W=wE_{11}$, $I-G=\begin{pmatrix}
					g & 0 & \dots & 0\\
					x_2 & 0 & \dots & 0 \\
					\vdots & \vdots & \ddots & \vdots \\
					x_n & 0 & \dots & 0
				\end{pmatrix}\vspace{0.2cm}$ for some $x_i,g\in R$, $2\leq i\leq n$.  Since $I-G$ is idempotent,   $g\in E(R), x_i\in Rg$, and  $i>1$. So,
				
				\begin{equation}\label{ZINCG}
					G=\begin{pmatrix}
						f & 0 & \dots & 0 & 0\\
						-x_2 & 1 & \dots & 0 & 0 \\
						\vdots & \vdots & \ddots & \vdots & \vdots \\
						-x_{n-1} & 0 & \dots & 1 & 0\\
						-x_n & 0 & \dots & 0 &  1
					\end{pmatrix}
				\end{equation}
				where $f=1-g$.
				Let $V^{-1}=(u_{ij})$. Since $V^{-1}W=GV^{-1}+I$ \vspace{0.2cm}, \\ $\begin{pmatrix}
					u_{11}w & 0 & \dots & 0\\
					u_{21}w & 0 & \dots & 0 \\
					\vdots & \vdots & \ddots & \vdots \\
					u_{n1}w & 0 & \dots & 0
				\end{pmatrix}=\begin{pmatrix}
					fu_{11}+1 & fu_{12} & \dots & fu_{1n}\\
					-x_2u_{11}+u_{21} & -x_2u_{12}+u_{22}+1 & \dots & -x_2u_{1n}+u_{2n} \\
					\vdots & \vdots & \ddots & \vdots \\
					-x_nu_{11}+u_{n1} & -x_nu_{12}+u_{n2} & \dots & -x_nu_{1n}+u_{nn}+1
				\end{pmatrix}\vspace{0.2cm}$ (see equation \ref{ZINCG}). So, we obtain the following equalities \ref{ZINC1}, \ref{ZINC2}, \ref{ZINC3}, \ref{ZINC4} and \ref{ZINC5} :
				
				\begin{equation}\label{ZINC1}
					fu_{11}+1=u_{11}w
				\end{equation} For $j>1$, we have
				\begin{equation}\label{ZINC2}
					fu_{1j}=0=(1-g)u_{1j}	
				\end{equation} For $1<j\leq n$, we have 
				\begin{equation}\label{ZINC3}
					-x_ju_{1j}+u_{jj}=-1	
				\end{equation} For $1<i\leq n$, we have
				\begin{equation}\label{ZINC4}
					-x_iu_{11}+u_{i1}=u_{i1}w	
				\end{equation}
				For all $i\neq j$,  $1<i,j\leq n$ 
				
				\begin{equation}\label{ZINC5}
					-x_iu_{1j}+u_{ij}=0	
				\end{equation}
				
				If $B=\begin{pmatrix}
					1 & u_{12} & \dots & u_{1n}\\
					0 & 1 & \dots & u_{2n} \\
					\vdots & \vdots & \ddots & \vdots \\
					0 & 0 & \dots & 1
				\end{pmatrix}\begin{pmatrix}
					1 & 0 & \dots & 0\\
					-x_2 & 1 & \dots & 0 \\
					\vdots & \vdots & \ddots & \vdots \\
					-x_n & 0 & \dots & 1
				\end{pmatrix}V^{-1}$, then   $B$  is invertible (see Lemma \ref{invertible}). Let $B=(b_{ij})$. We claim that $b_{11}\in U(R)$. Now, for determining $b_{ij}$, we use the Lemma \ref{ZINCmatrix} freely. So, we get

				$\begin{aligned}
					b_{11} & =(1-\sum\limits_{i=2}^nu_{1i}x_i)u_{11}+ \sum\limits_{k=2}^nu_{1k}u_{k1}\\ 
					& =u_{11}+u_{12}(-x_2u_{11}+u_{21})+u_{13}(-x_3u_{11}+u_{31})+\dots +u_{1n}(-x_nu_{11}+u_{n1})
				\end{aligned}$
				
				Therefore, by equation \ref{ZINC4}
				\begin{equation}\label{ZINC6}
					b_{11}=u_{11}+\sum\limits_{i=2}^nu_{1i}u_{i1}w	
				\end{equation} For $j> 1$, we have
				
				$\begin{aligned}
					b_{1j} & =(1-\sum\limits_{i=2}^nu_{1i}x_i)u_{1j}+ \sum\limits_{k=2}^nu_{1k}u_{kj}\\
					&=u_{1j}+u_{12}(-x_2u_{1j}+u_{2j})+\dots +u_{1j}(-x_ju_{1j}+u_{jj})+\dots +u_{1n}(-x_nu_{1j}+u_{nj})	
				\end{aligned}$
				Therefore, by equation  \ref{ZINC3}, equation \ref{ZINC5}, for $j>1$
				
				\begin{equation}\label{ZINC7}
					b_{1j}=0
				\end{equation}

				For $1<j<i$, 
				
				$\begin{aligned}
					b_{ij}& =(-x_i-\sum\limits_{k=i+1}^nu_{ik}x_k)u_{1j}+u_{ij}+\sum\limits_{k=i+1}^nu_{ik}u_{kj}\\
					&=-x_iu_{1j}+u_{ij}+\sum\limits_{k=i+1}^nu_{ik}(-x_ku_{1j}+u_{kj}).	
				\end{aligned}$ \\
				
				So, by equation \ref{ZINC5},  for $1<j<i$
				\begin{equation}
					b_{ij}=0
				\end{equation}
				
				For $i\neq 1$, we have
				
				$\begin{aligned}
					b_{ii} &=(-x_i-\sum\limits_{k=i+1}^nu_{ik}x_k)u_{1i}+u_{ii}+\sum\limits_{k=i+1}^nu_{ik}u_{ki}\\
					& =-x_iu_{1i}+u_{ii}+\sum\limits_{k=i+1}^nu_{ik}(-x_ku_{1i}+u_{ki}).
				\end{aligned}$

				Therefore, by equation \ref{ZINC3} and equation \ref{ZINC5}, for $i\neq 1$
				\begin{equation}
					b_{ii}=-1
				\end{equation}
				
				For $1<i<j$,
				
				$\begin{aligned}
					b_{ij} &=(-x_i-\sum\limits_{k=i+1}^nu_{ik}x_k)u_{1j}+u_{ij}+\sum\limits_{k=i+1}^nu_{ik}u_{kj}\\
					&=-x_iu_{1j}+u_{ij}+\sum\limits_{{k=i+1},k\neq j}^nu_{ik}(-x_ku_{1j}+u_{kj})+u_{ij}(-x_ju_{ij}+u_{jj}).
				\end{aligned}$

				Hence, by equation \ref{ZINC3} and equation \ref{ZINC5}, $1<i<j$, we have 
				\begin{equation}
					b_{ij}=-u_{ij}
				\end{equation}

				Therefore, $B=\begin{pmatrix}
					u_{11}+\sum\limits_{i=2}^nu_{1i}u_{i1}w & 0 & 0 & \dots & 0 & 0\\
					b_{21} & -1 & -u_{23} & \dots & -u_{2,n-1} & -u_{2n} \\
					\vdots & \vdots & \vdots & \vdots & \vdots & \vdots \\
					b_{n-1,1} & 0 & 0 & \dots & -1 & -u_{n-1,n}\\
					b_{n1} & 0 & 0 & \dots & 0 & -1
				\end{pmatrix}\vspace{0.2cm}$. 
				
				Take $c_{i1}=b_{i1}+\sum\limits_{j=i+1}^nb_{ij}(b_{j1}+\sum\limits_{k=j+1}b_{jk}b_{k1})$ and  $C=\begin{pmatrix}
					1 & 0 & 0 & \dots & 0 & 0\\
					c_{21} & 1 & 0 & \dots & 0 & 0 \\
					\vdots & \vdots & \vdots & \vdots & \vdots & \vdots \\
					c_{n-1,1} & 0 & 0 & \dots & 1 & 0\\
					c_{n1} & 0 & 0 & \dots & 0 & 1
				\end{pmatrix}\vspace{0.3cm}$. Note that $B$ and $C$ are units (see Lemma \ref{ZINCmatrix}) and so is\\
				$BC=\begin{pmatrix}
					b_{11} & 0 & 0 & \dots & 0  & 0\\
					0 & -1 & -u_{23} & \dots  & -u_{2,n-1} & -u_{2n} \\
					\vdots & \vdots & \vdots & \vdots & \vdots & \vdots \\
					0 & 0 & 0 & \dots & -1 & -u_{n-1,n}\\
					0 & 0 & 0 & \dots & 0 & -1
				\end{pmatrix}\vspace{0.2cm}\in U(M_n(R))$. Hence, $b_{11}=u_{11}+\sum\limits_{i=2}^nu_{1i}u_{i1}w\in U(R)$. Substituting $u_{11}=b_{11}-(\sum\limits_{i=2}^nu_{1i}u_{i1})w$ in equation \ref{ZINC1}, we get $f(b_{11}-(\sum\limits_{i=2}^nu_{1i}u_{i1})w)+1=(b_{11}-(\sum\limits_{i=2}^nu_{1i}u_{i1})w)w$. By equation \ref{ZINC2}, we obtain that $fb_{11}+1=b_{11}w-(\sum\limits_{i=2}^nu_{1i}u_{i1})w^2$, that is, $w-b_{11}^{-1}fb_{11}-b_{11}^{-1}=b_{11}^{-1}(\sum\limits_{i=2}^nu_{1i}u_{i1})w^2=b_{11}^{-1}(\sum\limits_{i=2}^ngu_{1i}u_{i1})w^2$ (by equation \ref{ZINC2}, $gu_{1i}=u_{1i}$ for all $i>1$). So, $w-e-v=(1-e)rw^2$, where $e=b_{11}^{-1}fb_{11}$, $v=b_{11}^{-1}$ and $r=(\sum\limits_{i=2}^nvu_{1i}u_{i1})$. Therefore, $R$ is weakly clean. 
			\end{proof}
			
			\begin{corollary}\label{pm}
				If $R$ is a domain such that $M_n(R)$ is ZINC, then for each $w\in R$ there exists $v\in U(R)$ such that $w-v=rw^2$ or $w-v=1$ for some $r\in R$.
			\end{corollary}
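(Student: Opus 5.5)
The plan is to specialize the conclusion of Theorem \ref{weaklyc}, or rather its proof, to the case where $R$ is a domain. Fix $w\in R$.

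By Theorem \ref{weaklyc} and its proof, there are an idempotent $e=b_{11}^{-1}fb_{11}$, a unit $v=b_{11}^{-1}$ and an element $r\in R$ with
\[
w-e-v=(1-e)rw^2 .
\]
A domain has only the trivial idempotents $0$ and $1$: if $e^2=e$, then $e(1-e)=0$, so $e=0$ or $e=1$. I will therefore split into these two cases.

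\emph{Case $e=0$.} The relation becomes $w-v=rw^2$ with $v\in U(R)$, which is the first alternative.

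\emph{Case $e=1$.} The right-hand side $(1-e)rw^2$ vanishes, so $w-1-v=0$, that is, $w-v=1$, which is the second alternative.

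Both cases use the same unit $v=b_{11}^{-1}$ produced in the proof of Theorem \ref{weaklyc}, so this completes the argument.

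There is one subtlety to check. The weakly clean definition quoted before Theorem \ref{weaklyc} says $x-e-u\in(1-e)Rx$, but the proof actually produces the stronger form $(1-e)rw^2$. So I should invoke the explicit equation at the end of that proof, not just the weakly clean conclusion, to get the $w^2$ factor. Beyond that the argument is immediate, and I expect no real obstacle.
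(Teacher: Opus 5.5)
Your proposal is correct and matches the paper, which gives no separate proof: the corollary comes from specializing the explicit identity $w-e-v=(1-e)rw^2$ at the end of the proof of Theorem \ref{weaklyc} to a domain, where the idempotent $e=b_{11}^{-1}fb_{11}$ must be $0$ or $1$. You are also right that the bare weakly clean conclusion $w-e-v\in(1-e)Rw$ would not give the $w^2$ factor. One thing to state explicitly is the hypothesis $n\geq 2$, inherited from Theorem \ref{weaklyc}: for $n=1$ every domain is ZINC, yet $w=3\in\mathbb{Z}$ violates the conclusion.
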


			\begin{corollary}
				For any integer $n>1$, $M_n(\mathbb{Z})$ is not ZINC.
			\end{corollary}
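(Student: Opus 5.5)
We argue by contradiction. Suppose $M_n(\mathbb{Z})$ is ZINC for some $n>1$. Since $\mathbb{Z}$ is a domain, Corollary \ref{pm} applies. So for every $w\in\mathbb{Z}$ there is a unit $v\in U(\mathbb{Z})=\{1,-1\}$ such that either $w-v=rw^2$ for some $r\in\mathbb{Z}$, or $w-v=1$. The plan is to produce one integer $w$ for which none of these finitely many alternatives can hold.

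We take $w=3$ and go through the cases. If $w-v=1$, then $v=2$, which is not a unit of $\mathbb{Z}$. If $w-v=rw^2$, then $3-v=9r$ with $v=\pm 1$. This means $2=9r$ or $4=9r$, and neither has an integer solution because $9$ divides neither $2$ nor $4$. Hence no admissible $v$ and $r$ exist for $w=3$, contradicting Corollary \ref{pm}. Therefore $M_n(\mathbb{Z})$ is not ZINC for any $n>1$.

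The only point needing care is that Corollary \ref{pm} is exactly the specialization of Theorem \ref{weaklyc} to domains, which we take as given. In that theorem the weakly clean relation reads $w-e-v=(1-e)rw^2$. In a domain the only idempotents are $e=0$ and $e=1$: the case $e=0$ gives $w-v=rw^2$, and the case $e=1$ gives $w-1-v=0$, i.e. $w-v=1$. Once these two alternatives are fixed, the arithmetic check at $w=3$ is immediate, so there is no real obstacle beyond choosing a suitable $w$. Any $w$ with $|w|\ge 3$ and $w\neq \pm 2$ works, since then $w\pm1$ is nonzero and smaller in absolute value than $w^2$.
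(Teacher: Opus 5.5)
Your proof is correct and is the paper's own argument: apply Corollary \ref{pm} to $w=3$ and observe that $3-v\in\{2,4\}$ for $v=\pm1$, which is neither $1$ nor divisible by $9$. You simply make explicit two things the paper leaves to the reader: the case split $e\in\{0,1\}$ that turns Theorem \ref{weaklyc} into Corollary \ref{pm}, and the divisibility check.
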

			\begin{proof}
				For any $u\in U(\mathbb{Z})=\{\pm 1\}$, we have 
				$3-u=2$ or $4$. By Corollary \ref{pm}, $M_n(\mathbb{Z})$ is not ZINC. 
			\end{proof}
			\begin{remark}
				Observe that all commutative rings are ZINC but need not be weakly clean, for instance, $\mathbb{Z}$. Therefore, Theorem \ref{weaklyc} is not true for $n=1$, in general.
			\end{remark} 
			
			\begin{remark}
				Any division ring is weakly clean. If $R$ is a division ring with $|R|>2$  then $M_n(R)$ is not ZINC (by Theorem \ref{dsnc}), where  $|R|$ denotes  the cardinality of $R$. Therefore, the converse of Theorem \ref{weaklyc} is not true.
			\end{remark}

			A \textit{Morita context} (\cite{morita}) is a $4$-tuple $\begin{pmatrix}
				R_1 & M\\
				P & R_2
			\end{pmatrix}$, where $R_1$, $R_2$ are rings, $M$ is $(R_1,R_2)$-bimodule and $P$ is $(R_2,R_1)$-bimodule, and there exists a context product $M\times P\rightarrow R_1$ and $P\times M\rightarrow R_2$ written multiplicatively as $(m,p)\mapsto mp$ and $(p,m)\mapsto pm$. Clearly, $\begin{pmatrix}
				R_1 & M\\
				P & R_2
			\end{pmatrix}$ is an associative ring with the usual matrix operations.\\
			A Morita context $\begin{pmatrix}
				R_1 & M\\
				P & R_2
			\end{pmatrix}$ is said to be trivial if the context products are trivial, that is, $MP=0$ and $PM=0$.

			\begin{theorem}
				Let $R=\begin{pmatrix}
					A & M\\
					N & B
				\end{pmatrix}$ be a trivial Morita context. If $A$ and $B$ are ZINC, then $R$ is ZINC.
				
			\end{theorem}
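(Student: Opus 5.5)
The plan is to reduce to Theorem \ref{nil} and Proposition \ref{prod}, in the same way as the corollary on $T_n(R)$, trivial extensions and $R[x]/\langle x^n\rangle$.

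Set
\[
I=\begin{pmatrix} 0 & M\\ N & 0\end{pmatrix}\subseteq R .
\]
The first step is to check that $I$ is a two-sided ideal. For $(a,m,n,b)\in R$ and $(0,m',n',0)\in I$ we get
\[
\begin{pmatrix} a & m\\ n & b\end{pmatrix}\begin{pmatrix} 0 & m'\\ n' & 0\end{pmatrix}=\begin{pmatrix} mn' & am'\\ bn' & nm'\end{pmatrix}.
\]
Since the context is trivial, $mn'\in MN=0$ and $nm'\in NM=0$, so this product lies in $I$. Multiplication on the other side is symmetric.

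The second step is to check that $I$ is nil, in fact $I^2=0$:
\[
\begin{pmatrix} 0 & m\\ n & 0\end{pmatrix}\begin{pmatrix} 0 & m'\\ n' & 0\end{pmatrix}=\begin{pmatrix} mn' & 0\\ 0 & nm'\end{pmatrix}=0 .
\]

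The third step is to identify the quotient. The map
\[
\begin{pmatrix} a & m\\ n & b\end{pmatrix}\mapsto (a,b)
\]
is a surjective ring homomorphism $R\to A\times B$ with kernel $I$. It is multiplicative because the diagonal entries of a product are $aa'+mn'=aa'$ and $nm'+bb'=bb'$, again using $MN=NM=0$. Hence $R/I\cong A\times B$.

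To finish, Proposition \ref{prod} shows that $A\times B$ is ZINC because $A$ and $B$ are. Then Theorem \ref{nil}, applied to the nil ideal $I$, gives that $R$ is ZINC.

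The only point that needs care is the verification that the diagonal projection is multiplicative. This is exactly where the triviality of the context products is used, and it is immediate from the products above. No other obstacle is expected.
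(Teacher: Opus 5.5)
Your proof is correct, but it is organized differently from the paper's. The paper does not introduce the ideal $I$. It starts from a factorization $S=PXQ$ with $PQ=0$ and uses $MN=NM=0$ to see two things: the diagonal entries of $PQ$ are $a_0a_1$ and $b_0b_1$, so both vanish, and the diagonal entries of $S$ are $a_0xa_1\in Z_i(A)$ and $b_0yb_1\in Z_i(B)$. It then applies the ZINC hypothesis in each corner and writes down the decomposition $S=\begin{pmatrix} e_x & 0\\ 0 & e_y\end{pmatrix}+\begin{pmatrix} n_x & m\\ n & n_y\end{pmatrix}$ by hand.

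Your reduction to Theorem \ref{nil} and Proposition \ref{prod} is shorter. It also fits the result into the same pattern as the corollary on $T_n(R)$ and trivial extensions. Moreover, you get the nilpotency of the remainder for free from $I^2=0$. The paper simply asserts that $\begin{pmatrix} n_x & m\\ n & n_y\end{pmatrix}$ is nilpotent. This is true: its $k$-th power has diagonal entries $n_x^k, n_y^k$, and its off-diagonal entries are sums of terms $n_x^i m n_y^j$ and $n_y^j n n_x^i$ with $i+j=k-1$.

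On the other hand, your route goes through idempotent lifting modulo a nil ideal, which is more than this situation requires. Because the context is trivial, $(a,b)\mapsto\begin{pmatrix} a & 0\\ 0 & b\end{pmatrix}$ is a ring homomorphism $A\times B\to R$ that splits your projection. So the idempotent can be taken explicitly diagonal, which is exactly what the paper's direct computation exploits.
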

			\begin{proof}
				Suppose $A$ and $B$ are  ZINC rings. Let $S\in Z_i(R)$. Then, $S=PXQ$, where  $P=\begin{pmatrix}
					a_0 & m_0 \\
					n_0 & b_0
				\end{pmatrix} $, $Q=\begin{pmatrix}
					a_1 & m_1 \\
					n_1 & b_1
				\end{pmatrix}, X=\begin{pmatrix}
					x & m_2 \\
					n_2 & y
				\end{pmatrix} \in R$ with $PQ=0.$ Since $PQ=0$, $a_0a_1=0$ and $b_0b_1=0$. Observe that $a_0xa_1\in Z_i(A)$ and $b_0yb_1\in Z_i(B)$. Since $A$ and $B$ are ZINC rings, there exist $e_x\in E(A),~e_y\in E(B)$, $n_x\in N(A),~m_y\in N(B)$ such that $a_0xa_1=e_x+n_x$\vspace{0.2cm} and $b_0yb_1=e_y+n_y$.
				Now, $PXQ=\begin{pmatrix}
					e_x & 0 \\
					0 & e_y
				\end{pmatrix}\vspace{0.2cm} +\begin{pmatrix}
					n_x & m \\
					n & n_y
				\end{pmatrix}$ for some $m\in M$ and $n\in N$. Observe that $\begin{pmatrix}
					e_x & 0 \\
					0 & e_y
				\end{pmatrix}\in E(R)$, $\begin{pmatrix}
					n_x & m \\
					n & n_y
				\end{pmatrix}\in N(R)$. Therefore, $R$ is ZINC.
				
			\end{proof}

			\begin{proposition}
				The following are equivalent:
				\begin{enumerate}
					\item $R$ is ZINC. 
					\item $Re$ is ZINC for all central idempetent $e$.
					\item There exists a central idempotent $e$ of $R$ such that both $Re$ and $R(1-e)$ are ZINC.
				\end{enumerate}
			\end{proposition}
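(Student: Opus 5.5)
The plan is to prove $(1)\Rightarrow(2)\Rightarrow(3)\Rightarrow(1)$. The main tool is the ring decomposition $R\cong Re\times R(1-e)$ for a central idempotent $e$, together with Proposition \ref{prod}. Here $Re$ is a ring with identity $e$.

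For $(1)\Rightarrow(2)$, let $e$ be central and take $x\in Z_i(Re)$, say $x=arb$ with $a,b,r\in Re$ and $ab=0$. Then $x\in Z_i(R)$ as well, so $x=f+h$ with $f\in E(R)$ and $h\in N(R)$. Multiplying by $e$ gives $x=xe=fe+he$. Since $e$ is central, $(fe)^2=f^2e^2=fe$, so $fe\in E(Re)$. Also $(he)^k=h^ke$, so $he\in N(Re)$. Hence $x$ is nil-clean in $Re$, and $Re$ is ZINC. Alternatively, one can note that $R\cong Re\times R(1-e)$ and apply the ``only if'' direction of Proposition \ref{prod}.

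The implication $(2)\Rightarrow(3)$ is immediate. Take $e=1$, or any central idempotent: then $1-e$ is also a central idempotent, so both $Re$ and $R(1-e)$ are ZINC by (2).

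For $(3)\Rightarrow(1)$, the map $x\mapsto (xe,x(1-e))$ is a ring isomorphism $R\cong Re\times R(1-e)$ because $e$ is central. The ``if'' direction of Proposition \ref{prod} then gives that $R$ is ZINC. If one wants it explicitly: for $x=arb\in Z_i(R)$ with $ab=0$, we have $xe=(ae)(re)(be)$ with $(ae)(be)=abe=0$, so $xe\in Z_i(Re)$, and similarly $x(1-e)\in Z_i(R(1-e))$. Write $xe=f_1+h_1$ and $x(1-e)=f_2+h_2$ with the $f_i$ idempotent and the $h_i$ nilpotent in the respective corners. Then $x=(f_1+f_2)+(h_1+h_2)$. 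Orthogonality of the corners gives $(f_1+f_2)^2=f_1+f_2$, and $(h_1+h_2)^k=h_1^k+h_2^k$, which is nilpotent.

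I expect no real obstacle. The only point needing care is centrality of $e$. It is what makes $e$ commute with $a$, $r$, $b$, $f$ and $h$, so that all these factorizations and idempotence and nilpotence computations pass through the corners without extra terms.
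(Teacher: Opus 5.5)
Your proof is correct and follows essentially the same route as the paper. The paper also dismisses $(1)\Rightarrow(2)\Rightarrow(3)$ as easy, and it proves $(3)\Rightarrow(1)$ the same way you do: write $x=xe+x(1-e)$, use centrality of $e$ to see each piece is zero-insertive in its corner, and add the two nil-clean decompositions. Your explicit treatment of $(1)\Rightarrow(2)$ (multiplying $x=f+h$ by $e$) and of the orthogonality computations simply fills in details the paper leaves implicit.
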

			\begin{proof}
				It is easy to observe that $(1)\Rightarrow (2)\Rightarrow (3)$.
				Now, for $(3)\Rightarrow (1)$, let $x\in Z_i(R)$. Observe that $x=xe+x(1-e)$. As $e$ is central, $xe\in Z_i(Re)$ and $x(1-e)\in Z_i(R(1-e))$. By hypothesis, $x=(f+p)+(g+q)$, where $f\in E(Re), p\in N(Re), g\in E(R(1-e)), q\in N(R(1-e))$. Hence, $x=(f+g)+(p+q)\in E(R) +N(R)$. 
			\end{proof}

		\end{document}